\documentclass[aps,pra,floats,tightenlines,a4paper,showpacs,onecolumn,10pt,longbibliography,superscriptaddress,notitlepage]{revtex4-1}
\usepackage{bbm, amsmath, amssymb, amsthm, bm,textcomp, nicefrac,geometry,ragged2e}
\usepackage{graphicx,epstopdf,color,verbatim,enumitem}
\geometry{tmargin=2cm,bmargin=2cm,lmargin=1.5cm,rmargin=1.5cm}
\usepackage[dvipsnames]{xcolor}
\usepackage[bbgreekl]{mathbbol}
\usepackage{graphicx,epstopdf,color,verbatim,enumitem,ulem}
\definecolor{myurlcolor}{rgb}{0,0,0.7}
\definecolor{myrefcolor}{rgb}{0.8,0,0}
\usepackage[unicode=true,pdfusetitle, bookmarks=false,bookmarksnumbered=false,
bookmarksopen=false, breaklinks=false,pdfborder={0 0 0},backref=false,
colorlinks=true, linkcolor=myrefcolor,citecolor=myurlcolor,urlcolor=myurlcolor]{hyperref}
\usepackage{pbox,array}
\usepackage{tikz}
\usetikzlibrary{shapes.geometric}
\usepackage{diagbox,xcolor}





\newcommand{\be}{\begin{equation}}
\newcommand{\ee}{\end{equation}}
\newcommand{\bea}{\begin{eqnarray}}
\newcommand{\eea}{\end{eqnarray}}

\def\Tb{{\bar T}}
\def\qb{{\bar q}}
\def\Qm{{\normalfont Q}}
\def\Tm{{\normalfont S}}
\def\Qf{{\normalfont {\text Q}}}
\def\Tf{{\normalfont {\text S}}}

\makeatletter
\newtheorem*{rep@theorem}{\rep@title}
\newcommand{\newreptheorem}[2]{%
\newenvironment{rep#1}[1]{%
 \def\rep@title{#2 \ref{##1}}%
 \begin{rep@theorem}}%
 {\end{rep@theorem}}}
\makeatother

\newreptheorem{theorem}{Theorem}
\newtheorem{lemma}{Lemma}
\newtheorem{proposition}{Proposition}
\newtheorem{theorem}{Theorem}
\newtheorem{corollary}{Corollary}

\newtheorem*{result*}{Result}

\begin{document}

\title{Simple Buehler-optimal confidence intervals on the average success probability of independent Bernoulli trials}

\author{Jean-Daniel Bancal}
\affiliation{Universit\'e Paris-Saclay, CEA, CNRS, Institut de physique th\'eorique, 91191, Gif-sur-Yvette, France}
\author{Pavel Sekatski}
\affiliation{Département de Physique Appliquée, Université de Genève, 1211 Genève, Suisse}

\date{\today}

\begin{abstract}
One-sided confidence intervals are presented for the average of non-identical Bernoulli parameters. These confidence intervals are expressed as analytical functions of the total number of Bernoulli games won, the number of rounds and the confidence level. Tightness of these bounds in the sense of Buehler, i.e.~as the strictest possible monotonic intervals, is demonstrated for all confidence levels. A simple interval valid for all confidence levels is also provided with a tightness guarantee. Finally, an application of the proposed confidence intervals to sequential sampling is discussed.
\end{abstract}

\maketitle



\section{Introduction}

Let $T_1, T_2, \ldots T_n=0,1$ be independent Bernoulli random variables with possibly distinct first moments $P(T_i=1)=q_i\in[0,1]$. The average success probability of these $n$ independent binary games, 
\begin{equation}
\bar q=\frac{1}{n} \sum_{i=1}^n q_i,
\end{equation}
is an important quantity. Given a confidence level $1-\alpha\in[0,1]$, we are interested in one-sided confidence intervals $[\hat q, 1]$ on $\qb$, i.e.~statistics $\hat q_\alpha(T_1,\ldots,T_n)$ such that~\cite{Fisher35, Neyman37}
\begin{equation}\label{eq:confintDef}
P(\qb \geq \hat q) \geq 1-\alpha.
\end{equation}
In particular, we focus on the case where the statistics $\hat q=\hat q_\alpha(n\Tb, n)$ only depends on the confidence parameter $\alpha$, the number of rounds $n$, and the total number of successes
\begin{equation}
n\Tb = \sum_{i=1}^n T_i.
\end{equation}
The random variable $n\Tb$ follows a Poisson binomial distribution, which has a number of well known properties, see~\cite{Tang19} for a recent review.

When considering random variables that are independent and identically distributed (i.i.d.), the central limit theorem~\cite{Esseen58} allows to construct confidence intervals in a generic way and with little information on the underlying distribution. Since each parameter $q_i$ can be different here, we do not wish to make the i.i.d.~assumption. Nevertheless, generic statistical techniques still allow for the construction of valid confidence intervals in this case. Non-i.i.d.~generalizations of Esseen's result constitute one possibility~\cite{Goldstein10}. Another well known option is given by Hoeffding's 1963 inequality $P(\Tb-\qb \geq t) \leq \exp(-2nt^2)$~\cite{Hoeffding63}, which can be rewitten as Eq.~\eqref{eq:confintDef} with the lower bound
\begin{equation}\label{eq:Hoeffding}
\hat q^\text{H} = \Tb - \sqrt{-\log(\alpha)/2n}.
\end{equation}
While admitting a simple analytical expression, confidence intervals based on generic bounds are typically not optimal. This can be problematic when the amount of available data is limited~\cite{Munich19}. Optimal confidence intervals providing the best possible results within a meaningful class of bounds are then particularly interesting~\cite{Buehler57,Lloyd03}.

In 1974, Agnew used a theorem by Hoeffding (distinct from Hoeffding's 1963 inequality) to construct a confidence interval for the average success probability of independent Bernoulli trials~\cite{Agnew74}. This interval is tighter than Eq.~\eqref{eq:Hoeffding}, but still not always optimal. It is sometimes understood that exact and optimal confidence intervals can be formulated, but may be computationally intractable. Recently, a Buehler-optimal confidence interval for the average success probability of independent Bernoulli trials was described in~\cite{Mattner15}. This work also provides a confidence interval which takes a simple form, but it is not applicable to arbitrary values of the confidence parameter $\alpha$ (see Remark 1.4 therein). Here, using Hoeffding's characterization for the cumulative distribution of the number of successes with independent trials already employed by Agnew~\cite{Hoeffding56}, we provide a compact formula for the optimal one-sided confidence interval in terms of standard analytical functions for any confidence parameter $\alpha\in[0,1]$. We then also present a simple expression for a confidence interval that is valid for all $\alpha$ and tight for high enough confidence levels.

More precisely, our main result given in Theorem~\ref{mainResult} is an expression of the monotonically tight one-sided confidence interval on $\qb$ which only depends on the confidence parameter $\alpha$, the total number of rounds $n$, and the number of Bernoulli games won $n\Tb$. When $\alpha\leq1/4$, our bound is given by Corollary~\ref{simpleBound} as the simple formula:

\begin{equation}\label{eq:qhatg}
\hat q^g_\alpha(n\Tb,n) =
\begin{cases}
0 & n\Tb = 0\\
\Tb - \frac{1-\alpha}{n(1-\alpha^*(n\Tb,n))}& n\Tb > 0,\ \alpha^*(n\Tb,n)\leq\alpha\leq 1\\
I^{-1}_\alpha(n\Tb,n(1-\Tb)+1) & n\Tb > 0,\ 0\leq\alpha\leq\alpha^*(n\Tb,n),
\end{cases}
\end{equation}
where
\begin{equation}\label{def:alphaStar}
\alpha^*(n\Tb,n)= I_{(n\Tb-1)/n}(n\Tb,n(1-\Tb)+1).
\end{equation}
Here, $I_x(a,b)$ is a standard mathematical function, namely the regularized incomplete Beta function~\cite{Wikipedia} (see also Appendix~\ref{sec:AppendixBetaFunction}), and $I_\alpha^{-1}(a,b)$ is its inverse. We show that the bound given by Eq.~\eqref{eq:qhatg} remains valid when $\alpha>\frac{1}{4}$, allowing its use for all confidence levels.


\section{Inverse of the cumulative distribution function bound}
In 1956, Hoeffding presented a theorem characterizing the cumulative distribution of the random variable $n\Tb$~\cite{Hoeffding56}. The lower-bound on this cumulative distribution is given by
\begin{equation}\label{eq:cumulative}
P(n\Tb \leq d) \geq 1 - f_{\qb}(d, n)
\end{equation}
with
\begin{equation}\label{eq:deff}
f_{\qb}(d, n) =
\begin{cases}
1                 & d \leq n\qb-1\\
\Qm_n(n-d-1, 1-\qb) & n\qb-1 < d < n\qb\\
I_\qb(d+1, n-d)   & n\qb \leq d.
\end{cases}
\end{equation}
In general, the relevant value of $d$ is in $0,\ldots,n$, but as $f_{\qb}(0,n)$ is not always equal to one, it will be useful later to also consider the above function in the case $d=-1$, with $f_{\qb}(-1, n)=1$ by construction. Therefore we assume that $n\in\mathbb{N}^*$ and $d=-1,\ldots,n$. We also define $c=n-d-1=-1,\ldots,n$ throughout this manuscript. Here, $\Qm_n(c,p)$ is the result of maximizing
\begin{equation}\label{eq:defQs}
\Qf_n(c,p,s) = 1 - I_{\frac{np-s}{n-s}}(c-s+1, n-c)
\end{equation}
over integer values of $s$:
\begin{equation}\label{eq:defQ}
\Qm_n(c,p) = \underset{0\leq s\leq c, s\in\mathbb{Z}}{\max} \Qf_n(c,p,s).
\end{equation}
Note that the function $\Qm_n(c,p)$ is used in Eq.~\eqref{eq:deff} only when $n\qb-1<d$, i.e.~for values of $c$ and $d$ in the range $0,\ldots, n-1$. 

As shown in~\cite{Hoeffding56}, we emphasize that Eq.~\eqref{eq:cumulative} is tight. When $d\leq n\qb-1$, the bound is reached by setting $q_i=1$ for $i\leq d+1$ and $q_i=\frac{n\qb-d-1}{n-d-1}$ for $i=d+2,\ldots,n$. In the remaining cases, the bound is reached by setting $q_i=0$ for $i\leq s$ and $q_i=\frac{n\qb}{n-s}$ for $i>s$, for various choices of $s$. For any $s$ such an assignment leads to 
\be\label{eq: cumul s}
P(n\bar T \leq d) = \sum_{k=0}^d \binom{n-s}{j} \left(\frac{n\qb}{n-s}\right)^k \left(1-\frac{n\qb}{n-s}\right)^{n-s-k} \!\!\!= I_{\frac{n (1-\qb) -s}{n-s}}(c-s+1, n-c)= 1- \Qf_n(c,1-\qb,s),
\ee
where we used Eq.~\eqref{eq: beta to binomial}. When $n\qb\leq d$, the bound is reached with $s=0$, i.e.~with the i.i.d.~model $q_i=\qb$. When $n\qb-1 \leq d \leq n\qb$, it is reached for the value of $s$ in $0,\ldots,n-d-1$ that maximizes Eq.~\eqref{eq:defQ}.

The bound Eq.~\eqref{eq:cumulative} on the cumulative distribution of a Poisson binomial distribution is also continuous and increasing in the average parameter $\qb$.

\begin{lemma}\label{thm:fcont}
Let $n\in\mathbb{N}^*$, $-1\leq d\leq n-1$, $\qb\in[0,1]$. The function $f_{\qb}(d,n)$ is continuous on $\qb\in[0,1]$. Moreover, it is strictly increasing on $\qb\in[0,\frac{d+1}{n}]$ and increasing in $\qb\in[0,1]$.
\end{lemma}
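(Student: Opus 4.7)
The plan is to split the domain of $\qb$ into the three intervals dictated by~\eqref{eq:deff}: $[0,d/n]$ where $f_{\qb}=I_{\qb}(d+1,n-d)$, the transition interval $[d/n,(d+1)/n]$ where $f_{\qb}=\Qm_n(c,1-\qb)$ with $c=n-d-1$, and $[(d+1)/n,1]$ where $f_{\qb}\equiv 1$. The edge case $d=-1$ gives $f\equiv 1$ and is immediate, so I assume $d\geq 0$ throughout.

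Continuity within each piece is standard: $I_{\qb}(d+1,n-d)$ is the continuous regularized incomplete Beta function; the middle piece is the pointwise maximum of the finitely many functions $\Qf_n(c,1-\qb,s)$, which are continuous in $\qb$ by inspection of~\eqref{eq:defQs}, hence continuous as well; and the top piece is constant. It remains to check the two junctions. At $\qb=(d+1)/n$ one computes $\Qf_n(c,c/n,c)=1-I_0(1,n-c)=1$, and since $\Qf_n\leq 1$ everywhere, this is the maximum, matching the top piece. At $\qb=d/n$, taking $s=0$ and applying the Beta-function symmetry $I_x(a,b)=1-I_{1-x}(b,a)$ gives $\Qf_n(c,(c+1)/n,0)=1-I_{(c+1)/n}(c+1,d+1)=I_{d/n}(d+1,n-d)$, which already matches the bottom piece. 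To see that no $s\geq 1$ exceeds this value, I invoke tightness of Hoeffding's bound in the third case of~\eqref{eq:deff}: every admissible $(q_i)$ with mean $d/n$, in particular the step models producing~\eqref{eq: cumul s}, must satisfy $P(n\Tb\leq d)\geq 1-I_{d/n}(d+1,n-d)$, which, rewritten via~\eqref{eq: cumul s}, reads $\Qf_n(c,(c+1)/n,s)\leq I_{d/n}(d+1,n-d)$ for every $s\in\{0,\ldots,c\}$. Hence $\Qm_n(c,(c+1)/n)=I_{d/n}(d+1,n-d)$ and continuity at $\qb=d/n$ follows.

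For monotonicity, on $[0,d/n]$ the incomplete Beta function $I_{\qb}(d+1,n-d)$ is strictly increasing because its derivative $\qb^{d}(1-\qb)^{n-d-1}/B(d+1,n-d)$ is strictly positive on $(0,1)$. For the middle piece I compute $\partial_{\qb}\Qf_n(c,1-\qb,s)=\frac{n}{n-s}\cdot\frac{x^{c-s}(1-x)^{d}}{B(c-s+1,d+1)}$ with $x=(n(1-\qb)-s)/(n-s)$, and verify $x\in(0,1)$ for every admissible $s\in\{0,\ldots,c\}$ and $\qb\in(d/n,(d+1)/n)$: $x>0$ follows from $s\leq c<n(1-\qb)$ since $\qb<(d+1)/n$, and $x<1$ from $\qb>0$. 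Each $\Qf_n(c,1-\qb,s)$ is therefore strictly increasing in $\qb$ on the open middle interval, and so is their pointwise maximum $\Qm_n(c,1-\qb)$. Combined with continuity at the junction $\qb=d/n$ this gives strict monotonicity on $[0,(d+1)/n]$. The top piece is constant $1$, so $f_{\qb}(d,n)$ is (weakly) monotone on the whole of $[0,1]$.

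The main obstacle is matching the middle piece to the bottom piece at $\qb=d/n$: a direct algebraic proof that $\Qf_n(c,(c+1)/n,s)\leq\Qf_n(c,(c+1)/n,0)$ for $s\geq 1$ is not obvious from~\eqref{eq:defQs}, since increasing $s$ simultaneously inflates the argument $x$ of $I_x$ and deflates one of its Beta parameters, pulling $\Qf_n$ in opposite directions. The probabilistic detour above bypasses this difficulty by realizing each $\Qf_n(c,(c+1)/n,s)$ as $1-P(n\Tb\leq d)$ for the explicit step model of~\eqref{eq: cumul s} and invoking Hoeffding's tightness in the i.i.d.~case.
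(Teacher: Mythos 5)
Your proof is correct and follows essentially the same route as the paper's: the key step in both is to realize each $\Qf_n(c,\cdot,s)$ as $1-P(n\Tb\leq d)$ for an explicit step model and invoke the validity of Hoeffding's cumulative bound to conclude that the $s=0$ term dominates where $f_{\qb}=I_{\qb}(d+1,n-d)$, with continuity at $\qb=(d+1)/n$ via the $s=c$ term and strict monotonicity from positivity of the Beta density. The only difference is organizational: the paper first rewrites $f$ as $\Qm_n(n-d-1,1-\qb)$ on all of $n\qb-1<d$ and argues once, whereas you glue three pieces at the two junctions, which amounts to the same argument.
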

\begin{proof}
By definition
\begin{equation}
 \Qf_n(n-d-1, 1-\qb, 0)= 1- I_{1-\qb}(n-d,d+1)= I_{\qb}(d+1,n-d),
\end{equation}
we must thus have
\begin{eqnarray}\label{eq:maxscase3}
I_{\qb}(d+1,n-d)\ &&=\underset{0\leq s\leq n-d-1, s\in\mathbb{Z}}\max \Qf_n(n-d-1, 1-\qb, s)\\
&&=\Qm_n(n-d-1,1-\qb)
\end{eqnarray}
when $n\qb\leq d$. Otherwise, assigning $q_i=0$ for $i\leq s$ and $q_i=\frac{n\qb}{n-s}$ for $i>s$  would yield 
$P(n\bar T \leq d ) = 1 - \Qf_n(n-d-1,1-\qb,s) < I_{\qb}(d+1,n-d)$ for some $s>0$ (by Eq.~\eqref{eq: cumul s}), and violate the bound Eq.~\eqref{eq:cumulative}. Therefore, the function $f_\qb(d,n)$ can be expressed more concisely as
\begin{equation}\label{eq:f2}
f_{\qb}(d, n) =
\begin{cases}
1                 & d \leq n\qb-1\\
\Qm_n(n-d-1, 1-\qb) & n\qb-1 < d.
\end{cases}
\end{equation}
This function is obviously constant, hence continuous, in its first branch, which includes the case $d=-1$. It is also continuous at $\qb=\frac{d+1}{n}$: by construction, the value of $\Qm_n(n-d-1,1-\qb)$ cannot be larger than 1, and this value is reached in Eq.~\eqref{eq:defQs} with the choice $s=n-d-1$. Therefore, $\Qm_n(n-d-1,1-\qb)=1$ for $\qb=\frac{d+1}n$.

Finally, to show the continuity and strict monotonicity in the domain $\qb\in[0,\frac{d+1}{n}]$ (with $d\geq 0$), we note that for $0\leq s\leq n-d-1$, the parameters of the Beta function in Eq.~\eqref{eq:defQs} satisfy $c-s+1=n-d-s\geq 1$, $n-c=d-1\geq 1$ and $\frac{np-s}{n-s}\in[0,1]$. Since $I_x(a,b)$ with $a,b>0$ and $x\in[0,1]$ is the cumulative distribution of the Beta distribution $B(a,b)$, which is a strictly positive function, the function $\Qf_n(n-d-1,1-\qb,s)$ is continuous and strictly increasing with $\qb$ for all $0\leq s\leq n-d-1$. $\Qm_n(n-d-1,1-\qb)$ is then the maximum over a finite set of such $\Qf_n(n-d-1,1-\qb,s)$ functions, so $\Qm_n(n-d-1,1-\qb)$ is also strictly increasing with $\qb$.
\end{proof}

We are interested in the inverse function of $f_\qb(d,n)$ over $\qb$. When $\qb \leq d/n$, the function $f_\qb(d,n)$ is given by $I_\qb(d+1,n-d)$. Its inverse on the image interval $\alpha=I_\qb(d+1,n-d)\in[0,\alpha^\dag]$ with
\begin{equation}
\alpha^\dag=I_{d/n}(d+1,n-d)
\end{equation}
is then simply given by
\begin{equation}\label{eq:Iinv}
I_\alpha^{-1}(d+1,n-d).
\end{equation}

For compactness, we do not write the functional dependence of $\alpha^\dag$ explicitly, but note that it is a function of $d$ and $n$. When expressed as a function of $c$ and $n$, it takes the form
\begin{equation}\label{eq:alpha_dag_c}
\alpha^\dag=1-I_{(c+1)/n}(c+1,n-c).
\end{equation}

When $d/n<\qb<(d+1)/n$, $f_\qb(d,n)$ is defined in terms of the function $\Qm_n(c,p)$. Due to lemma~\ref{thm:fcont}, this function is bijective. The following lemma characterizes its inverse on $p$.
$\Tf$
\begin{lemma}\label{thm:Qinv}
Let $n\in\mathbb{N}^*$ and $0\leq c\leq n-1$. The inverse of the function $\Qm_n(c,p)$ in the domain $p\in[\frac{c}{n}, \frac{c+1}{n}] \to \alpha\in[\alpha^\dag, 1]$, is given by
\begin{equation}\label{eq:defT}
\Tm_n(c,\alpha)=\underset{0\leq s\leq c, s\in\mathbb{Z}}{\max} \Tf_n(c,\alpha,s),
\end{equation}
where
\begin{equation}
\Tf_n(c,\alpha,s)=\frac{s + (n-s)I_{1-\alpha}^{-1}(c-s+1,n-c)}{n}.
\end{equation}
\end{lemma}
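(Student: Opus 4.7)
The strategy is to invert each $\Qf_n(c,\cdot,s)$ individually and then combine the pointwise inverses through a maximum, mirroring the max that defines $\Qm_n$.

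First, I would note that for each fixed integer $s$ with $0\le s\le c$, the map $p\mapsto \Qf_n(c,p,s)=1-I_{(np-s)/(n-s)}(c-s+1,n-c)$ is continuous on $p\in[c/n,(c+1)/n]$ and strictly decreasing there. Indeed, the argument $(np-s)/(n-s)$ is strictly increasing in $p$ and stays in $[(c-s)/(n-s),(c-s+1)/(n-s)]\subset[0,1]$, while $x\mapsto I_x(c-s+1,n-c)$ is the strictly increasing cumulative distribution of a Beta distribution with positive parameters. In terms of $\qb=1-p$, this is exactly the monotonicity statement already used in the proof of Lemma~\ref{thm:fcont}. The same lemma implies that $\Qm_n(c,\cdot)$ is continuous and strictly decreasing on $[c/n,(c+1)/n]$, with endpoint values $\Qm_n(c,c/n)=1$ (attained by $s=c$, since then $\Qf_n(c,c/n,c)=1-I_0(1,n-c)=1$) and $\Qm_n(c,(c+1)/n)=\alpha^\dag$ (attained by $s=0$, via Eq.~\eqref{eq:alpha_dag_c}). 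Hence $\Qm_n(c,\cdot):[c/n,(c+1)/n]\to[\alpha^\dag,1]$ is a bijection whose inverse is uniquely defined.

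Second, I would invert each $\Qf_n(c,\cdot,s)$ algebraically. Solving $\Qf_n(c,p,s)=\alpha$ amounts to $I_{(np-s)/(n-s)}(c-s+1,n-c)=1-\alpha$; applying $I^{-1}$ and solving the resulting linear equation for $p$ gives exactly $p=\Tf_n(c,\alpha,s)$. So $\Tf_n(c,\alpha,s)$ is the unique $p$ at which $\Qf_n(c,\cdot,s)$ takes the value $\alpha$.

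Third, I would combine these pointwise inversions. For $\alpha\in[\alpha^\dag,1]$ and $p\in[c/n,(c+1)/n]$, the equation $\Qm_n(c,p)=\alpha$ is equivalent to the conjunction of (i) $\Qf_n(c,p,s^*)=\alpha$ for some $s^*\in\{0,\dots,c\}$, and (ii) $\Qf_n(c,p,s)\le\alpha$ for every $s\in\{0,\dots,c\}$. By the second step, (i) forces $p=\Tf_n(c,\alpha,s^*)$ for some $s^*$, so $p\le \Tm_n(c,\alpha)$; by the strict decrease of each $\Qf_n(c,\cdot,s)$ in $p$, condition (ii) translates into $p\ge \Tf_n(c,\alpha,s)$ for every $s$, i.e.\ $p\ge \Tm_n(c,\alpha)$. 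Together, $p=\Tm_n(c,\alpha)$, which is the desired inversion formula.

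The only mild obstacle is the bookkeeping check that $\Tm_n(c,\alpha)$ actually lies in $[c/n,(c+1)/n]$ for $\alpha\in[\alpha^\dag,1]$, so that the entire argument takes place inside the domain of bijectivity. The endpoint values computed above give $\Tm_n(c,1)=c/n$ and $\Tm_n(c,\alpha^\dag)=(c+1)/n$, and continuity in $\alpha$ handles the interior. No further substantive computation is needed.
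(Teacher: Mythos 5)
Your proof is correct and takes essentially the same route as the paper's: both invert each $\Qf_n(c,\cdot,s)$ individually via $\Tf_n(c,\alpha,s)$ and then exploit the strict monotonicity of $\Qf_n(c,\cdot,s)$ in $p$ to transfer the maximum over $s$ from $\Qm_n$ to $\Tm_n$. Your two-sided inequality in the third step is a repackaging of the paper's argument that the $s'$ maximizing $\Qf_n(c,p,s)$ also maximizes $\Tf_n(c,\alpha',s)$ at $\alpha'=\Qm_n(c,p)$.
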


\begin{proof}
By the continuity of $f_{\qb}(d,n)$ shown in Lemma~\ref{thm:fcont}, $\Qm_n(c,\frac{c}{n})=1$ and $\Qm_n(c,\frac{c+1}{n})=\alpha^\dag$. The inverse of $\Qm_n(c,p)$ is then the function $\Qm_n^{-1}(c,\alpha)$ defined from $\alpha\in[\alpha^\dag, 1] \to p\in[\frac{c}{n}, \frac{c+1}{n}]$ which satisfies $\Qm^{-1}_n(c,\Qm_n(c,p))=p$ for all $p\in[\frac{c}{n},\frac{c+1}{n}]$.

Before checking that $\Tm_n(c,\alpha)$ satisfies this condition, we notice that the function $\Tf_n(c,\alpha,s)$ is, by construction, the inverse of $\Qf_n(c,p,s)$ over a larger domain: it satisfies the conditions
\begin{eqnarray}
\Tf_n(c,\Qf_n(c,p,s),s)=p \label{eq:TQ} \\
\Qf_n(c, \Tf_n(c,\alpha,s) ,s)=\alpha \label{eq:QT}
\end{eqnarray}
for all $n\in\mathbb{N}^*,0\leq c\leq n-1,0\leq s\leq c$, and in the domain $p\in[\frac{s}{n},1]$, $\alpha\in[0,1]$.

Let us now show that the value $s'$ of $s$ which maximizes $\Qf_n(c,p,s)$ in Eq.~\eqref{eq:defQ} for a given value of $n$, $c$ and $p$, also maximizes $\Tf_n(c,\alpha',s)$ in Eq.~\eqref{eq:defT} for the same values of $n$ and $c$, and for $\alpha'=\Qf_n(c,p,s')\in[0,1]$. Since
\begin{equation}
\Qf_n(c,p,s')=\underset{0\leq s\leq c, s\in\mathbb{Z}}{\max} \Qf_n(c,p,s),
\end{equation}
we can write for all $0\leq s''\leq c$:
\begin{align}
\Qf_n(c,p,s'') & \leq \Qf_n(c,p,s')\\
& = \alpha'\\
&= \Qf_n(c, \Tf_n(c,\alpha',s''),s''))\\
&= \Qf_n(c, p'', s''),
\end{align}
where $p''=\Tf_n(c,\alpha',s'')$ and we used Eq.~\eqref{eq:QT}. Since the function $\Qf_n(c,p,s'')$ is monotonously decreasing in $p$, we obtain
\begin{equation}
\Tf_n(c,\alpha',s'') = p'' \leq p = \Tf_n(c,\alpha',s')\ \ \ \forall\ 0\leq s''\leq c,
\end{equation}
meaning that $\Tf_n(c,\alpha',s)$ is maximized when $s$ is set to $s'$:
\begin{equation}\label{eq:TsPrime}
\Tm_n(c,\alpha')= \underset{0\leq s\leq c, s\in\mathbb{Z}}{\max} \Tf_n(c,\alpha,s) = \Tf_n(c,\alpha',s').
\end{equation}

Therefore, we find that $\Tm_n(c,\alpha')$ for $\alpha'=\Qm_n(c,p)$ can be written
\begin{eqnarray}
\Tm_n(c,\Qm_n(c,p)) \ &&= \Tm_n(c,\alpha')\\
&&= \Tf_n(c,\alpha',s')\\
&&=\Tf_n(c,\Qf_n(c,p,s'),s')\\
&&=p,
\end{eqnarray}
where we used Eq.~\eqref{eq:TsPrime} and the identity Eq.~\eqref{eq:TQ}. We thus have $\Qm_n^{-1}(c,\alpha) = \Tm_n(c,\alpha)$.
\end{proof}

Let us define the function
\begin{equation}\label{eq:defftilde}
\tilde f_\alpha(d,n) =
\begin{cases}
0 & d=-1\\
1-\Tm_n(n-d-1,\alpha)          & d\geq 0,\ \alpha^\dag < \alpha \leq 1\\
I_{\alpha}^{-1}(d+1, n-d)  & d\geq 0,\ 0 \leq \alpha \leq \alpha^\dag
\end{cases}
\end{equation}
for $\alpha\in[0,1]$, $-1\leq d\leq n$, $n\in\mathbb{N}^*$. The following proposition summarizes the discussion of the inverse of the function $f_\qb(d,n)$ with respect to $\qb$ when $d\geq0$. Note that $f_\qb(-1,n)=1$ is constant and therefore the case $d=-1$ does not admit an inverse.

\begin{proposition}\label{prop:Inverse}
Let $n\in\mathbb{N}^*$ and $0\leq d\leq n$. The function $f_\qb(d,n)$ is bijective, strictly increasing on the domain $\qb\in[0,\frac{d+1}{n}] \to \alpha\in[0,1]$. Its inverse on this domain is given by
\begin{equation}\label{eq:deffinverse}
f^{-1}_\alpha(d,n)=\tilde f_\alpha(d,n).
\end{equation}
\end{proposition}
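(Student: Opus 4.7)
The plan is to assemble this proposition directly from the two preceding lemmas after settling a few endpoint computations. \lem{thm:fcont} already provides continuity of $f_\qb(d,n)$ on $[0,1]$ and strict monotonicity on $[0,(d+1)/n]$, and its proof already shows $\Qm_n(n-d-1,1-\qb)=1$ at $\qb=(d+1)/n$, so $f_{(d+1)/n}(d,n)=1$. Together with $f_0(d,n) = I_0(d+1,n-d) = 0$ (from the third branch of \eq{eq:deff}) and continuity, this yields the bijection $[0,(d+1)/n]\to[0,1]$, which is the first claim of the proposition.

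To identify the inverse, I would split the $\qb$-interval at $\qb=d/n$, which is where the two nontrivial branches of \eq{eq:deff} meet and which is mapped to $\alpha^\dag=I_{d/n}(d+1,n-d)$. On the lower sub-interval $[0,d/n]$ the function coincides with $I_\qb(d+1,n-d)$, so the inverse on $[0,\alpha^\dag]$ is simply $I_\alpha^{-1}(d+1,n-d)$, matching the third branch of $\tilde f_\alpha(d,n)$ in \eq{eq:defftilde}. On the upper sub-interval $(d/n,(d+1)/n]$ the function reads $\Qm_n(n-d-1,1-\qb)$; under the substitution $c=n-d-1$, $p=1-\qb$, the range $[d/n,(d+1)/n]$ becomes $[c/n,(c+1)/n]$, exactly the domain treated by \lem{thm:Qinv}. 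Its inverse $\Tm_n(c,\alpha)$ translates back to $\qb = 1-\Tm_n(n-d-1,\alpha)$, matching the second branch of $\tilde f_\alpha(d,n)$.

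The remaining consistency check is that the two inverse branches glue correctly at $\alpha=\alpha^\dag$. For this I would appeal to \eq{eq:maxscase3}: at $\qb=d/n$ the hypothesis $n\qb\leq d$ holds, so $\Qm_n(n-d-1,1-d/n)=I_{d/n}(d+1,n-d)=\alpha^\dag$, meaning both pieces of $\tilde f_\alpha(d,n)$ send $\alpha^\dag$ to $\qb=d/n$. A brief sanity check at the other endpoint $\alpha=1$, where $I_0^{-1}(\cdot,\cdot)=0$ gives $\Tf_n(c,1,s)=s/n$ and hence $\Tm_n(n-d-1,1)=(n-d-1)/n$, confirms $1-\Tm_n(n-d-1,1)=(d+1)/n$ as required.

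The main subtlety, and really the only nontrivial ingredient of the argument, is the invocation of \lem{thm:Qinv} to invert the maximum-over-$s$ function $\Qm_n$; once that lemma is in hand, the rest of the proof is routine piecewise assembly and endpoint bookkeeping.
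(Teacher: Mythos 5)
Your proposal is correct and follows essentially the same route as the paper, whose proof is a one-line appeal to Lemma~\ref{thm:fcont}, Eq.~\eqref{eq:Iinv} on $\qb\in[0,\frac{d}{n}]$, and Lemma~\ref{thm:Qinv} on $\qb\in[\frac{d}{n},\frac{d+1}{n}]$. The endpoint and gluing checks you spell out (values at $\qb=0$, $\qb=\frac{d+1}{n}$, and the match of both branches at $\alpha=\alpha^\dag$) are left implicit in the paper but are consistent with it.
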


\begin{proof}
This is a direct consequence of lemma~\ref{thm:fcont}, Eq.~\eqref{eq:Iinv} when $\qb\in[0,\frac{d}{n}]$, $\alpha\in[0,\alpha^\dag]$ and Lemma~\ref{thm:Qinv}  when $\qb\in[\frac{d}{n},\frac{d+1}{n}]$, $\alpha\in[\alpha^\dag,1]$.
\end{proof}

\section{One-sided confidence intervals}
Having recalled the form of the function $f_\qb(d,n)$, which bounds the cumulative distribution of the sum of $n$ Bernoulli random variables $n\Tb$, Eq.~\eqref{eq:cumulative}, and found its inverse, we can now proceed to compute a confidence interval for $\qb$. A naive idea here would be to associate to any $\alpha$, $\qb$ and $n$ a value $d^*$ such that $f_\qb (d^*, n) = \alpha$. Eq.~\eqref{eq:cumulative} and the fact that $f_\qb(d,n)$ is a decreasing function of $d$ would imply that $1-\alpha \leq P(n\Tb \leq d^*)\leq P\Big(f_\qb(n\Tb, n) \geq \alpha = f_\qb(d^*, n)\Big)$. We could then obtain a confidence interval on $\qb$ by applying the inverse function $f^{-1}_\alpha(d,n)$ inside the probability. Yet, this argumentation is clearly not directly applicable because the bound \eqref{eq:cumulative} is restricted to integer values $d=-1,\dots, n$. To take care of this fact we now compute the confidence interval in two steps.

\begin{lemma}\label{lem:lemma}
The random variable $f_\qb(n\Tb-1,n)$ satisfies the following inequality:
\begin{equation}
P\Big(f_\qb( n\Tb-1,n)\geq \alpha \Big)\geq 1-\alpha.
\end{equation}
\end{lemma}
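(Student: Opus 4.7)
The plan is to convert the event $\{f_\qb(n\Tb - 1, n) \geq \alpha\}$ into a left-tail event for $n\Tb$ to which the Hoeffding bound~\eqref{eq:cumulative} applies directly. The key enabling property is that $f_\qb(d,n)$ is non-increasing in $d$. This is not immediately visible from the piecewise definition~\eqref{eq:deff}, but it follows from the tightness of~\eqref{eq:cumulative} emphasized after~\eqref{eq: cumul s}: one has $1 - f_\qb(d,n) = \inf P(n\Tb \leq d)$, where the infimum runs over independent Bernoulli families with average $\qb$, and since $d \mapsto P(n\Tb \leq d)$ is non-decreasing for each such family, so is the infimum.

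With this monotonicity in hand, I define
\begin{equation}
d^* = \max\Big\{d \in \{-1, 0, \ldots, n\} : f_\qb(d, n) \geq \alpha\Big\}.
\end{equation}
The set is non-empty because $f_\qb(-1, n) = 1 \geq \alpha$, and monotonicity in $d$ forces it to take the form $\{-1, \ldots, d^*\}$. Since $n\Tb - 1$ takes integer values in $\{-1, \ldots, n-1\}$, this yields
\begin{equation}
P\Big(f_\qb(n\Tb - 1, n) \geq \alpha\Big) = P(n\Tb - 1 \leq d^*) = P(n\Tb \leq d^* + 1).
\end{equation}

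Two cases then remain. If $d^* = n$, the right-hand side equals $1$ deterministically and the claim is trivial. Otherwise $d^* + 1 \in \{0, \ldots, n\}$ is a valid argument of~\eqref{eq:cumulative}, which gives $P(n\Tb \leq d^* + 1) \geq 1 - f_\qb(d^* + 1, n)$, and by maximality of $d^*$ one has $f_\qb(d^* + 1, n) < \alpha$, closing the inequality. I expect the only real subtlety to be establishing the monotonicity of $f_\qb(\cdot, n)$; once that is secured, the rest is a clean reduction that bypasses any direct case analysis of~\eqref{eq:deff}.
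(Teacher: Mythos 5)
Your proof is correct and follows essentially the same route as the paper's: both arguments derive the monotonicity of $f_\qb(\cdot,n)$ in $d$ from the tightness of Eq.~\eqref{eq:cumulative}, pick a threshold $d^*$ separating the values of $d$ with $f_\qb(d,n)\geq\alpha$ from the rest, and then apply Eq.~\eqref{eq:cumulative} at $d^*+1$. Your definition of $d^*$ as a maximum turns the paper's chain of probability inequalities into an exact identity of events, which is a slightly cleaner bookkeeping of the same idea.
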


\begin{proof}
The cases $\alpha=0,1$ can be verified directly. Let us thus assume that $\alpha\in(0,1)$. Since $f_\qb(d,n)$ is a tight upper bound on $1-P(n \bar T\leq d)$, it is monotonously decreasing in the integer variable $d$: $f_\qb(d,n)\geq f_\qb(d+1,n)$. Moreover, as $f_\qb(-1,n)=1$ and $f_\qb(n,n)=0$ for any parameter $\qb\in[0,1]$ and $\alpha\in(0,1)$, there exists an integer $-1\leq d^*\leq n-1$ such that 
\be
f_\qb(d^*,n) \geq \alpha \geq f_\qb(d^*+1,n).
\ee
Then, using the fact that
\begin{align}
X \leq Y \Rightarrow f_\qb(X,n) \geq f_\qb(Y,n)
\end{align}
for random variable $X$ and $Y$ implies
\begin{align}\label{eq:ineqforP} 
P(X \leq Y) \leq P(f_\qb(X,n) \geq f_\qb(Y,n)),
\end{align}
we have the following chain of inequalities:
\begin{align}
P\Big(f_\qb( n \Tb-1,n)\geq \alpha \Big) &\geq P\Big(f_\qb( n \Tb-1,n)\geq f_\qb(d^*,n)\Big) \\
 & \geq P\Big(n \Tb-1 \leq d^*\Big)\\ 
 & = P\Big(n \Tb \leq d^*+1 \Big).
\end{align}
By Eq.~\eqref{eq:cumulative} we then have 
\begin{align}
P\Big(f_\qb( n \Tb-1,n)\geq \alpha \Big) & \geq 1- f_\qb(d^*+1,n)\\
 & \geq 1- \alpha.
\end{align}
\end{proof}

We can now show that $[\hat q^f, 1]$ with
\begin{equation}
\hat q^{f}_\alpha(n\Tb,n) = \tilde f_\alpha(n\Tb-1,n)
\end{equation}
is a confidence interval on $\qb$.

\begin{theorem}\label{mainResult}
The two following confidence intervals on the average winning probability $\qb$ hold:
\begin{eqnarray}
P(\qb &\geq& \hat q^f_\alpha(n\Tb,n)) \geq 1-\alpha \label{eq:confInt1}\\
P(\qb &\leq& 1-\hat q^f_\alpha(n(1-\Tb),n)) \geq 1-\alpha \label{eq:confInt2}
\end{eqnarray}
\end{theorem}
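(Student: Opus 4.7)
The plan is to invert the event inside the probability in Lemma~\ref{lem:lemma} using the inverse function $\tilde f_\alpha$ characterized in Proposition~\ref{prop:Inverse}, and then obtain the second bound from the first by applying the Bernoulli symmetry $T_i \mapsto 1 - T_i$. The whole argument for Eq.~\eqref{eq:confInt1} is essentially a translation between two ways of writing the same subset of the sample space; the real work has already been done in the lemmas above.

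The key deterministic fact I would establish first is that, for each integer $d \in \{-1, 0, \ldots, n-1\}$ and each $\alpha \in [0,1]$,
\begin{equation*}
\{\qb \in [0,1] : f_{\qb}(d,n) \geq \alpha\} = [\tilde f_\alpha(d,n),\, 1].
\end{equation*}
For $d = -1$ this is immediate since $f_{\qb}(-1,n) = 1$ always and $\tilde f_\alpha(-1,n) = 0$ by construction. For $d \geq 0$ I would combine three ingredients: continuity of $f_{\qb}(d,n)$ in $\qb$ on $[0,1]$ and strict monotonicity on $[0,(d+1)/n]$ from Lemma~\ref{thm:fcont}; the fact that the first branch of Eq.~\eqref{eq:deff} forces $f_{\qb}(d,n) = 1$ whenever $\qb \in [(d+1)/n, 1]$; and inversion of the strictly monotonic piece via Proposition~\ref{prop:Inverse}, which identifies $\tilde f_\alpha(d,n)$ as the unique threshold.

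Next I would plug in the random value $d = n\Tb - 1 \in \{-1, 0, \ldots, n-1\}$ to obtain the pointwise event identity
\begin{equation*}
\{f_{\qb}(n\Tb - 1, n) \geq \alpha\} = \{\qb \geq \tilde f_\alpha(n\Tb - 1, n)\} = \{\qb \geq \hat q^f_\alpha(n\Tb, n)\}.
\end{equation*}
Taking probabilities and invoking Lemma~\ref{lem:lemma} immediately yields Eq.~\eqref{eq:confInt1}. For Eq.~\eqref{eq:confInt2}, I would set $T'_i = 1 - T_i$, so that the $T'_i$ are independent Bernoulli trials with parameters $q'_i = 1 - q_i$, average $\qb' = 1 - \qb$, and total $n\Tb' = n(1 - \Tb)$. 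Eq.~\eqref{eq:confInt1} applied to the primed system gives $P(\qb' \geq \hat q^f_\alpha(n\Tb', n)) \geq 1 - \alpha$, which rewrites as Eq.~\eqref{eq:confInt2}.

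The main obstacle I anticipate is the boundary bookkeeping: the trivial case $d = -1$ falls outside the scope of Proposition~\ref{prop:Inverse} and has to be handled separately, and the two non-trivial branches of $\tilde f_\alpha$ in Eq.~\eqref{eq:defftilde} must glue continuously to the constant-one region of $f_{\qb}(d,n)$ at $\qb = (d+1)/n$ in order for the set equality above to hold as a genuine equality (rather than just one inclusion). This is exactly what the continuity clause of Lemma~\ref{thm:fcont} was arranged to guarantee, so I would appeal to it at this step explicitly.
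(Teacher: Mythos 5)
Your proposal is correct and follows essentially the same route as the paper: both reduce Eq.~\eqref{eq:confInt1} to Lemma~\ref{lem:lemma} via the monotonicity and inversion results of Lemma~\ref{thm:fcont} and Proposition~\ref{prop:Inverse}, and obtain Eq.~\eqref{eq:confInt2} by the substitution $T_i\mapsto 1-T_i$. The only (harmless, arguably cleaner) difference is that you package the inversion step as an exact event identity $\{f_{\qb}(n\Tb-1,n)\geq\alpha\}=\{\qb\geq\hat q^f_\alpha(n\Tb,n)\}$, whereas the paper passes through the auxiliary quantity $\tilde q=f^{-1}_{f_{\qb}(d,n)}(d,n)\leq\qb$ and a chain of probability inequalities.
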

\begin{proof}
The second inequality, Eq.~\eqref{eq:confInt2}, is obtained from the first one by performing the change of variable $q_i \to 1-q_i$, $\qb \to 1-\qb$, $\Tb \to 1-\Tb$. We thus focus on the first inequality, Eq.~\eqref{eq:confInt1}. The case $n\Tb=0$ can be checked explicitely, therefore we assume $n\Tb\geq 1$. In this case, $\tilde f_\alpha(n\Tb-1,n) = f^{-1}_\alpha(n\Tb-1,n)$ and so we need to show that $P(\qb \geq f^{-1}_\alpha(n\Tb-1,n)) \geq 1-\alpha$.

By Lemma~\ref{thm:fcont}, the function $f_\qb(d,n)$ is strictly increasing on $\qb=[0,\frac{d+1}{n}]$, with inverse $f_\alpha^{-1}(d,n)$, and simply increasing on $\qb=[0,1]$. We then have the inequality
\begin{equation}
\tilde q=f^{-1}_{f_\qb(d,n)}(d,n) \leq \qb,
\end{equation}
with equality when $\qb\in[0,\frac{d+1}{n}]$. For a random variable $X$, this implies
\begin{equation}
P(\qb \geq X) \geq P(\tilde q \geq X).
\end{equation}
With $d=n\Tb-1$ and $X=f^{-1}_\alpha(n\Tb-1,n)$ we find:
\begin{align}
P(\qb \geq f^{-1}_\alpha(n\Tb-1,n)) & \geq P(f^{-1}_{f_\qb(n\Tb-1,n)}(n\Tb-1,n) \geq f^{-1}_\alpha(n\Tb-1,n))\\
&\geq P(f_\qb(n\Tb-1,n)\geq \alpha)\\
&\geq 1-\alpha,
\end{align}
where we used that $f^{-1}_\alpha(d,n)$ is an increasing function of $\alpha$, as well as Lemma~\ref{lem:lemma}.
\end{proof}


\section{Optimality}

Having established a lower bound on $\qb$, we now study its optimality. Since $\mathbb{E}(n\Tb)=n\qb$ monotonically increases with $\qb$, observing a larger value of $n\Tb$ suggests that $\qb$ is larger. Hence, we consider confidence intervals $\hat q_\alpha(n\Tb,n)$ which are increasing in $n\Tb$, i.e.~satisfying
\begin{equation}
\hat q_\alpha(n\Tb,n)\leq \hat q_\alpha(n\Tb+1,n)
\end{equation}
for $0\leq n\Tb\leq n$. Following Buehler~\cite{Buehler57,Lloyd03}, we then say that a bound $\hat q_\alpha(n\Tb,n)$ is optimal if
\begin{equation}
\hat q_\alpha(n\Tb,n)\geq\hat q'_\alpha(n\Tb,n)\ \ \ \forall\ 0\leq n\Tb\leq n
\end{equation}
holds for all other increasing confidence interval bound $\hat q'_\alpha(n\Tb,n)$.

\begin{proposition}
Let $n\in\mathbb{N}^*$, $\alpha<1$. The statistic $\hat q_\alpha^f(n\Tb,n)$ is the largest monotonically increasing function of $n\Tb$ that satisfies the condition
\begin{equation}
P(\qb\geq\hat q)\geq 1-\alpha
\end{equation}
for all sets of independent Bernoulli random variables $T_1$, $T_2$, \ldots, $T_n$.
\end{proposition}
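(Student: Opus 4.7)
The plan is to prove optimality by a saturation argument built on the tightness of Eq.~\eqref{eq:cumulative}. For each value $k$ of $n\Tb$, Hoeffding's construction supplies an explicit product of independent Bernoullis whose mean $\qb$ lies just above $\qb^{*}=\hat q_\alpha^f(k,n)$ and for which $P(n\Tb\ge k)$ strictly exceeds $\alpha$. Any competitor $\hat q'$ that is strictly larger than $\hat q_\alpha^f$ at some $k$ will then fail the coverage requirement on this distribution.

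I would first record the preliminary monotonicity check that $\hat q_\alpha^f$ is itself a valid candidate: $\hat q_\alpha^f(0,n)=0$ and $\hat q_\alpha^f(k,n)=f_\alpha^{-1}(k-1,n)$ for $k\ge 1$. Since $f_\qb(d,n)$ is weakly decreasing in the integer $d$ (as already used in the proof of Lemma~\ref{lem:lemma}) and strictly increasing in $\qb$ on $[0,(d+1)/n]$ by Lemma~\ref{thm:fcont}, the inverse $f_\alpha^{-1}(d,n)$ is weakly increasing in $d$, so $\hat q_\alpha^f$ is monotonic.

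For the main argument I would let $\hat q'$ be another monotonic confidence interval and assume, for contradiction, $\hat q'(k,n)>\hat q_\alpha^f(k,n)$ for some $k$. The case $k=0$ is settled directly by the deterministic distribution $q_1=\dots=q_n=0$, which yields $\qb=n\Tb=0$ and $P(\qb\ge\hat q'(0,n))=0<1-\alpha$. For $k\ge 1$, set $\qb^{*}=\hat q_\alpha^f(k,n)=f_\alpha^{-1}(k-1,n)$, so by construction $f_{\qb^{*}}(k-1,n)=\alpha$. Because $f_{k/n}(k-1,n)=1>\alpha$ and $f_\qb(k-1,n)$ is strictly increasing in $\qb$ on $[0,k/n]$, one automatically has $\qb^{*}<k/n$. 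I would then pick $\delta=\hat q'(k,n)-\qb^{*}>0$ and $\qb=\qb^{*}+\delta/2$, taking $\delta$ small enough that $\qb<k/n$.

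The contradiction then comes from two opposite estimates for this $\qb$. On the one hand, the tightness of Eq.~\eqref{eq:cumulative} recalled in the discussion following it produces a product of Bernoullis with average $\qb$ achieving $P(n\Tb\ge k)=f_\qb(k-1,n)$, which strictly exceeds $f_{\qb^{*}}(k-1,n)=\alpha$ by the strict monotonicity part of Lemma~\ref{thm:fcont}. On the other hand, monotonicity of $\hat q'$ implies $\hat q'(n\Tb,n)\ge\hat q'(k,n)=\qb^{*}+\delta>\qb$ whenever $n\Tb\ge k$, so the coverage condition $P(\qb\ge\hat q'(n\Tb,n))\ge 1-\alpha$ forces $P(n\Tb\ge k)\le\alpha$, contradicting the previous line. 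The main technical point to watch is the strictness bookkeeping: since $\hat q'$ is only assumed weakly monotonic, the shift from $\qb^{*}$ to $\qb^{*}+\delta/2$ is essential to convert $\hat q'(k,n)>\qb^{*}$ into the strict inequalities needed, and one must verify $\qb^{*}<k/n$ so that $\qb$ still lies in the range where Lemma~\ref{thm:fcont} supplies strict monotonicity.
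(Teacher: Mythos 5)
Your argument is correct and is essentially the paper's own proof: both reduce the coverage probability via monotonicity to $P(n\Tb\leq k-1)$, invoke the achievability of Hoeffding's bound \eqref{eq:cumulative} to exhibit a worst-case distribution, and use the strict monotonicity and invertibility of $f_\qb(d,n)$ from Lemma~\ref{thm:fcont} and Proposition~\ref{prop:Inverse} to conclude. The only differences are cosmetic — you perturb $\qb$ slightly above $f^{-1}_\alpha(k-1,n)$ and derive a contradiction, whereas the paper takes $\qb=y-\delta$ just below the competitor's value and lets $\delta\to 0$; you also explicitly check that $\hat q^f_\alpha$ is itself monotone, which the paper leaves implicit.
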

\begin{proof}
Since $q_i=0\ \forall i$ produces $n\Tb=0$ with $\qb=0$, it is clear that $\hat q_\alpha(0,n)$ must be equal to zero whenever $\alpha<1$ and therefore $\hat q_\alpha(n\Tb,n)=\hat q^f_\alpha(n\Tb,n)$ is optimal in this case. We thus restrict our attention now to the case $n\Tb\geq 1$, for which $\tilde f_\alpha(n\Tb-1,n)=f^{-1}_\alpha(n\Tb-1,n)$.

To see that $\hat q^f_\alpha(n\Tb,n)$ constitutes the best lower bound when $n\Tb\geq1$, consider any increasing confidence interval $\hat q^?_\alpha(n\Tb,n)$ on $\qb$ which associates to a point $n\Tb=d^*+1$ some value $y$, and let us show that we must have $y\leq f^{-1}_\alpha(d^*,n)$. This guarantees that $\hat q^?_\alpha(n\Tb,n) \leq \hat q_\alpha^f(n\Tb,n)$ at any value $d^*\in[1,n]$.

To do so let us introduce the smallest increasing statistic $\hat q'_\alpha(n\Tb,n)$, that satisfy $\hat q'_\alpha(d^*+1,n) = y$. It has the shape of a step function:
\begin{equation}\label{eq:qprime}
\hat q'_\alpha(n\Tb,n) =
\begin{cases}
0 & n\Tb < d^*+1\\
y & n\Tb \geq d^*+1,
\end{cases}
\end{equation}
and satisfies $\hat q'_\alpha(n\Tb,n)\leq \hat q^?_\alpha(n\Tb,n)$, and therefore $P(\qb\geq \hat q'_\alpha(n\Tb,n))\geq P(\qb\geq \hat q^?_\alpha(n\Tb,n))\geq 1-\alpha$, by construction: since $[q^?_\alpha(n\Tb,n),1]$ is a confidence interval, so is $[q'_\alpha(n\Tb,n),1]$. In other words, if some choice of $y$ prevents $[q'_\alpha(n\Tb,n),1]$ from being a confidence interval, this choice of $y$ is not admissible to $q^?_\alpha(n\Tb,n)$ either. So it is sufficient to prove that we must have $y\leq f^{-1}_\alpha(d^*,n)$ for $[\hat q'_\alpha(n\Tb,b),1]$ to be a valid confidence interval.

For a model with some parameter $\qb$, the probability that the estimator $\hat q'$ gives a valid lower bound on $\qb$ is given by
\begin{equation}
P(\qb\geq \hat q'_\alpha(n\Tb,n)) =
\begin{cases}
1 & \qb \geq y\\
P(n\Tb<d^*+1) & \qb < y.
\end{cases}
\end{equation}
In particular, for a model with average winning probability $\qb=y-\delta$, with $\delta>0$, the above probability is given by
\begin{align}
P(\qb\geq \hat q'_\alpha(n\Tb,n)) &= P(n\Tb<d^*+1)\\
&= P(n\Tb \leq d^*)\\
&\geq 1-f_{y-\delta}(d^*,n)
\end{align}
where we used the bound on the cumulative distribution of $n$ independent Bernoulli trials from Eq.~\eqref{eq:cumulative}. In fact, this bound is achievable by a choice of parameters $\{q_i\}_i$ satisfying $\sum_i q_i = n(y-\delta)$~\cite{Hoeffding56}. Thus, in this case we have:
\begin{equation}
\begin{split}
P(\qb\geq \hat q'_\alpha(n\Tb,n)) &= 1-f_{y-\delta}(d^*,n).
\end{split}
\end{equation}
Therefore, for $\hat q'_\alpha(n\Tb,n)$ to be a valid lower bound on $\qb$ with confidence parameter $\alpha$, we must have
\begin{equation}
f_{y-\delta}(d^*,n) \leq \alpha
\end{equation}
Since $\alpha<1$, this is only possible if $y-\delta \leq \frac{d^*+1}{n}$. In this case $f_{y-\delta}(d^*,n)$ is bijective and its inverse is given by Proposition~\ref{prop:Inverse}. Hence we find
\begin{equation}
y -\delta \leq f^{-1}_\alpha(d^*,n).
\end{equation}
Since this condition has to be true for all $\delta>0$, we find the maximum possible value of $y$ in Eq.~\eqref{eq:qprime}: $y \leq f^{-1}_\alpha(d^*,n)$.
\end{proof}

Note that a similar argument implies that Eq.~\eqref{eq:confInt2} is also a Buehler optimal upper bound on $\qb$.

Another consequence from this proposition is that the statistic $\hat q^{0}_\alpha(n\Tb,n)=I^{-1}_\alpha(n\Tb, n-n\Tb+1)$, sometimes referred to as the Clopper-Pearson bound, does not provide a valid confidence interval for $\qb$, as already noticed earlier~\cite{Mattner15}. This statistic would yield the optimal confidence interval in presence of i.i.d.~Bernoulli variables, i.e.~for a binomial trial, but it is not valid here when the trials need not be identical: for the value $n\Tb=1$, $\hat q^{0}_\alpha(1,n)=I^{-1}_\alpha(1,n)= 1-(1-\alpha)^{1/n} > \hat q_\alpha(1,n)=\tilde f_\alpha(n\Tb-1,n)=\frac{\alpha}{n}$. The tightest binomial-like confidence interval is thus given by the statistic
\begin{equation}\label{eq:MunichBound}
\hat q^{1}_\alpha(n\Tb,n)=
\begin{cases}
0 & n\Tb \leq 1\\
I^{-1}_\alpha(n\Tb-1,n-n\Tb+2) & n\Tb \geq 2,
\end{cases}
\end{equation}
shown to yield a valid confidence interval when $\alpha \leq 1/2$ in~\cite{Munich19}. Since $\lim_{\alpha\to1}I^{-1}_\alpha(n\Tb-1,n-n\Tb+2)=1$ for $n\Tb\geq 2$, it is clear that this bound cannot be true for all confidence parameters $\alpha$, contrary to $\hat q^f_\alpha(n\Tb,n)$.

\section{A simpler explicit bound}
The expression for the confidence interval given in Theorem~\ref{mainResult} involves a function maximization through the definition of the $\Tm_n(c,\alpha)$ function. This maximization requires at most $c+1\leq n+1$ evaluations of the function $\Tf_n(c,\alpha,s)$, which scales favorably with $n$. Still, a confidence interval which does not involve such maximization would be appreciable. For this, let us show that the functions $\Qm_n(c,p)$ and $\Tm_n(c,\alpha)$ can be upper bounded with linear functions of $p$ and $\alpha$ respectively.

\begin{lemma}\label{linearBounds}
For $\frac{c}{n}\leq p \leq \frac{c+1}{n}$, $0 \leq c \leq n-1$, $\alpha^\dag\leq \alpha\leq 1$, we have the following two inequalities:
\begin{eqnarray}
\Qm_n(c,p) &\leq& R_n(c,p) = 1 - (1-\alpha^\dag)(np-c) \label{eq:Qupper}\\
\Tm_n(c,\alpha) &\leq& U_n(c,\alpha)=\frac{1}{n}\left(c + \frac{1-\alpha}{1-\alpha^\dag}\right) \label{eq:Tupper}
\end{eqnarray}
Moreover, these inequalities are tight for $c=n-1$.
\end{lemma}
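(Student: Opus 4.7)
The plan is to (i) note that the two inequalities are equivalent by inversion, (ii) reduce the first inequality to a Beta-function bound for each fixed $s$, and (iii) prove that bound via an explicit identity that makes the relevant quantity manifestly nonnegative. Tightness at $c=n-1$ follows by direct inspection.

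For (i), the functions $\Qm_n(c,\cdot)$ and $R_n(c,\cdot)$ are both strictly decreasing on $[c/n,(c+1)/n]$ with matching endpoint values ($1$ at $p=c/n$ and $\alpha^\dag$ at $p=(c+1)/n$), and $U_n(c,\cdot)$ is by construction the inverse of $R_n(c,\cdot)$. Since Lemma~\ref{thm:Qinv} gives $\Tm_n=\Qm_n^{-1}$ on the same intervals, inequality~\eqref{eq:Qupper} holds pointwise on $[c/n,(c+1)/n]$ iff~\eqref{eq:Tupper} holds pointwise on $[\alpha^\dag,1]$. So it is enough to prove the first bound.

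For (ii) and (iii), writing $\Qm_n=\max_s\Qf_n(c,\cdot,s)$ reduces~\eqref{eq:Qupper} to proving $\Qf_n(c,p,s)\le R_n(c,p)$ separately for each $s\in\{0,\ldots,c\}$. Setting $k=c-s$, $m=n-s$, $v=np-c\in[0,1]$, and noting that $m-k=n-c$ is independent of $s$, this inequality reads
\begin{equation*}
L(v;m,k):=I_{(k+v)/m}(k+1,m-k)\ \ge\ (1-\alpha^\dag)\,v.
\end{equation*}
I split this into two chained inequalities $L(v;m,k)\ge v\,L(1;m,k)\ge v(1-\alpha^\dag)$. The right one is immediate, since $L(1;m,k)=1-\Qf_n(c,(c+1)/n,s)\ge 1-\Qm_n(c,(c+1)/n)=1-\alpha^\dag$. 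For the left one I show that $v\mapsto L(v;m,k)/v$ is non-increasing on $(0,1]$: using the standard derivative $L'(v)=P(\mathrm{Bin}(m-1,(k+v)/m)=k)$ together with the Pascal-type identity $P(\mathrm{Bin}(m,q)=k+1)=\frac{mq}{k+1}P(\mathrm{Bin}(m-1,q)=k)$ evaluated at $q=(k+v)/m$, splitting off the $j=k+1$ term of $L(v)$ yields the identity
\begin{equation*}
L(v)-v\,L'(v)=\frac{k(1-v)}{k+1}L'(v)+P\!\left(\mathrm{Bin}(m,(k+v)/m)\ge k+2\right),
\end{equation*}
both summands of which are manifestly nonnegative on $v\in[0,1]$. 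Hence $(L/v)'\le 0$ and $L(v)\ge v\,L(1)$, as desired.

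For tightness at $c=n-1$, one has $\alpha^\dag=1-I_1(n,1)=0$ and $R_n(n-1,p)=n-np$. Taking $s=c=n-1$ gives $m=1$, $k=0$, and $\Qf_n(n-1,p,n-1)=1-I_{np-(n-1)}(1,1)=n-np$, exactly matching $R_n$. Since this $\Qf_n$ is a lower bound on $\Qm_n$ and the upper bound has just been proven, $\Qm_n(n-1,p)=R_n(n-1,p)$, and the corresponding equality $\Tm_n=U_n$ follows by inversion. The main obstacle is finding the algebraic decomposition that produces the displayed identity for $L(v)-v\,L'(v)$; once the $j=k+1$ term is split off via the Pascal-type relation, the nonnegativity of each piece and the resulting chord bound $L(v)\ge vL(1)$ fall out immediately.
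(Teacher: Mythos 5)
Your proof is correct, and the core of it takes a genuinely different route from the paper. Both proofs start identically: reduce Eq.~\eqref{eq:Qupper} to the per-$s$ bound $\Qf_n(c,p,s)\leq R_n(c,p)$, verify that bound at the endpoint $p=\frac{c+1}{n}$ using $\Qf_n\leq \Qm_n=\alpha^\dag$ there, obtain Eq.~\eqref{eq:Tupper} by inverting the two decreasing functions, and get tightness at $c=n-1$ from the $s=c$ term. The difference is how the bound is extended from the endpoint to the whole interval. The paper runs a convexity analysis of $p\mapsto I_{x(p)}(a,b)$: it splits into the cases $b=1$, $a=1$ and $a,b\geq 2$, locates the inflection point, and in the last case bounds the maximal slope by $\frac{n}{2}$ via an external binomial-coefficient inequality (its Lemma~\ref{maxbinomial}, citing Sasv\'ari) while bounding $\ell'(p)=n(1-\alpha^\dag)\geq \frac{n}{2}$ via the binomial-median result of Kaas--Buhrman. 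You instead prove the chord (star-shapedness) bound $L(v)\geq v\,L(1)$ directly, by showing $L(v)-vL'(v)=\frac{k(1-v)}{k+1}L'(v)+P\left(\mathrm{Bin}(m,\tfrac{k+v}{m})\geq k+2\right)\geq 0$ through the exact Pascal-type identity $P(\mathrm{Bin}(m,q)=k+1)=\frac{mq}{k+1}P(\mathrm{Bin}(m-1,q)=k)$ (I checked the identity; it is exact, and the edge cases $k=0$ and $m=1$ are harmless). Your route is more self-contained and uniform --- no case split on $a,b$, no median lemma, no Sasv\'ari bound --- and it isolates the real mechanism (the ratio $L(v)/v$ is monotone) rather than comparing slopes; what it gives up is only the paper's explicit information about where $I_{x(p)}(a,b)$ changes convexity, which the paper does not use elsewhere. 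Note also that you only need the implication from Eq.~\eqref{eq:Qupper} to Eq.~\eqref{eq:Tupper}, not the equivalence you assert, but the direction you need is exactly the one that holds for decreasing bijections with common range, as in the paper's own final step.
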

\begin{proof}
We start by showing the bound Eq.~\eqref{eq:Qupper}. As $\Qm_n(c,p) = \max_s \Qf_n(c,p,s)$, it is implied by
\begin{equation}\label{eq: 69}
    \Qf_n(c,p,s) = 1 - I_{\frac{np-s}{n-s}}(c-s+1, n-c)\leq 1 - (1-\alpha^\dag)(np-c)
\end{equation}
for all integer $0\leq s\leq c$. Let us introduce the functions  
\begin{align}\label{eq: x(p)}
    x(p) &=\frac{n p-s}{n-s}= 1 -\frac{n}{n-s}(1-p) \in \left[0,\frac{c}{n}\right]\\
    \label{eq: ell(p)}
    \ell(p)&=(1-\alpha^\dag)(np-c) \in [0,1-\alpha^\dag],
\end{align}
where we recalled that $\alpha^\dag$ can be expressed as a function of $c$ through Eq.~\eqref{eq:alpha_dag_c}, and rewrite the bound Eq.~\eqref{eq: 69} as
\begin{equation}\label{eq:toprove}
     I_{x(p)}(a, b) \geq \ell(p),
\end{equation}
where $a = c-s +1\geq 1$, $b=n-c \geq 1$. To demonstrate it we proceed by considering several cases.\\

For $b=1$, i.e.~$c=n-1$ and $a= n-s$, one gets $\alpha^\dag = 0$ and the inequality~\eqref{eq:toprove} becomes
\begin{eqnarray}\label{eq: temp 1}
 &&I_{x(p)}(n-s, 1) \geq np-c
    \\ \label{eq: 72}
    && \Longleftrightarrow \left(1- \frac{n}{n-s}(1-p)\right)^{n-s} \geq 1- n(1-p) \\
    &&\Longleftrightarrow \left(1- \frac{1-\delta}{n-s}\right)^{n-s} \geq \delta.
\end{eqnarray}
In the last line we introduced the variable $\delta = 1-n(1-p)$, which satisfies $0\leq \delta\leq 1$ as guaranteed by $\frac{c}{n}\leq p \leq \frac{c+1}{n}$ and $c= n-1$. We note that the inequality is manifestly tight for $s=n-1$ (it becomes $\delta\geq \delta$), which shows that Eq.~\eqref{eq:Qupper} is tight for $c=n-1$ as claimed. To prove that the bound holds in general, i.e.~for $s\leq n-1$, we now show that the left hand side is a monotonically decreasing function of $s\in [0,n-1]$. Equivalently, the function
\begin{equation}
 f(v)=    \left(1- \frac{1-\delta}{v}\right)^v
 \end{equation}
 $1 \leq v = n-s \leq n$ is monotonically increasing in $v$. This can be done by computing its derivative, which satisfies 
\begin{equation}
    \frac{\partial}{\partial v} \left(1- \frac{1-\delta}{v}\right)^v \propto \frac{1-z}{z} + \log(z) \geq 0
\end{equation}
for $0< z=1-\frac{1-\delta}{v}\leq 1$, using the standard lower bound on the natural logarithm $1-\frac{1}{z} \leq \log(z)$ for $z>0$. In the limiting case $z=0$, corresponding to $p=1$, the inequality~\eqref{eq: 72} holds trivially.

Next we consider the case $b\geq 2$. As a preliminary step we analyze the concavity/convexity of $I_{x(p)}(a, b)$ as a function of $p$ in the interval $p\in \left[\frac{c}{n}, \frac{c+1}{n} \right]$. To do so let us compute the derivatives of $I_{x(p)}(a, b)$ with respect to $p$, which can be done straightforwardly with the help of the integral representation of the incomplete Beta function
\begin{equation}
     I_{x}(a,b) = a \binom{a+b-1}{b-1}  \int_0^x t^{a-1} (1-t)^{b-1} d t.
\end{equation}
For the first derivative we obtain
\begin{eqnarray}\label{eq: I'}
     I_{x}'(a,b)\ &&=  \frac{\partial}{\partial p} I_{x(p)}(a,b) = \frac{\partial x(p)}{\partial p} \frac{\partial I_{x}(a,b)}{\partial x} \\
     &&= a \binom{a+b-1}{b-1}   x'(p)\,  \frac{\partial}{\partial x} \int_0^x  t^{a-1} (1-t)^{b-1} d t 
     \\
     && = a \binom{a+b-1}{b-1} \frac{n}{n-s}\,  x(p)^{a-1} \big(1-x(p)\big)^{b-1}.
\end{eqnarray}
Computing the second derivative is also straightforward and gives
\begin{eqnarray}
     I_{x}''(a,b)\ &&=  \frac{\partial^2}{\partial p^2} I_{x(p)}(a,b) \\
     && = a \binom{a+b-1}{b-1} \frac{n}{n-s}\,\frac{\partial}{\partial p}  x(p)^{a-1} \big(1-x(p)\big)^{b-1}\\
     && = a \binom{a+b-1}{b-1} \left(\frac{n}{n-s}\right)^2
     \begin{cases}
     x^{a-2}(1-x)^{b-2} \Big(a-1 - (a+b-2) x \Big) & a\geq 2\\
     -(b-1) (1-x)^{b-2}   & a=1 \\
     \end{cases},
\end{eqnarray}
where we wrote $x= x(p)$ to improve readability. We will now analyze the two regimes separately.\\

For $a=1$, that is $s=c$, the second derivative is negative, hence $I_{x(p)}(1,b)$ is concave. Furthermore, for the minimal value of $p=\frac{c}{n}$  the bound~\eqref{eq:toprove} that we want to prove is saturated
\begin{equation}
    I_{x(\frac{c}{n})}(1,b) = 0 = \ell\left(\frac{c}{n}\right).
\end{equation}
Thus, since $I_{x(p)}(1,b)$ is concave and $\ell(p)$ is linear, to demonstrate the bound~\eqref{eq:toprove} on the whole interval $p\in \left[\frac{c}{n}, \frac{c+1}{n} \right]$ it is sufficient to show that it holds for the maximal value of $p=\frac{c+1}{n}$, i.e.
\begin{equation}
    (i)\qquad I_{x(\frac{c+1}{n})}(1,b) \geq \ell\left(\frac{c+1}{n}\right).
\end{equation}
We show (i) below.\\

For $a\geq 2$ the sign $I_{x(p)}''(a,b)$ can be determined from 
\be
I_{x(p)}''(a,b) \propto a-1 - (a+b-2)x(p) = a-1-\frac{(a+b-2) (n-c p)}{n-c}.
\ee
It is easy to see that $I_{x(p)}''(a,b) \geq 0 $ for $\frac{c}{n}\leq p \leq p_*= \frac{c n-c s-s}{n (n-s-1)}$ and $I_{x(p)}''(a,b) < 0 $ for $p_*< p\leq \frac{c+1}{n}$. In particular, this shows that the derivative of $I_{x(p)(a,b)}$ attains its global maximum at $p_*$:
\begin{equation}\label{eq:global I'}
    I'_{x(p)}(a,b) \leq I'_{x(p_*)}(a,b)
\end{equation}\\
for $\frac{c}{n}\leq p \leq \frac{c+1}{n}$. Now we can prove that $I_{x(p)}(a,b)\geq \ell(p)$ by showing the following inequalities
\begin{eqnarray}
(ii)\qquad &&I_{x(\frac{c+1}{n})}(a,b) \geq \ell\left(\frac{c+1}{n}\right)\\
(iii) \qquad  &&I'_{x(p)}(a,b) \leq \ell'(p) \quad \text{for} \qquad p\in \left[\frac{c}{n}, \frac{c+1}{n} \right].
\end{eqnarray}
Since $I_{x(p)}(a,b)$ is above $\ell(p)$ at the end of the interval  $p=\frac{c+1}{n}$ (by $(ii)$) and $I_{x(p)}(a,b)$ decrease slower than $\ell(p)$ for $p\leq\frac{c+1}{n}$ (by $(iii)$), Eq.~\eqref{eq:toprove} holds on the whole interval. We now show $(i)$,$(ii)$ and $(iii)$\\

Expressions $(i)$ and $(ii)$ can be combined in a single bound $ 1-I_{x(\frac{c+1}{n})}(a,b) \leq 1-\ell\left(\frac{c+1}{n}\right)$ for $a\geq 1$. Using the Eqs.~(\ref{eq: 69},\ref{eq: x(p)},\ref{eq: ell(p)}) and $c=n-d-1$ we rewrite both sides of this inequality as
\be
1-I_{x(\frac{c+1}{n})}(a,b) = 1-I_{x(\frac{c+1}{n})}(c-s+1,n-c)
= \Qf_n\left(c,\frac{c+1}{n},s\right) = \Qf_n\left(n-d-1,1-\frac{d}{n},s\right)
\ee
and
\be   
1-\ell\left(\frac{c+1}{n}\right) = 1- (1-\alpha^\dag)
= I_{\frac{n-c-1}{n}}(n-c,c+1)
 = I_{d/n}(d+1,n-d)
\ee
for $s\leq n-d-1$. But
\begin{equation}
\Qf_n\left(n-d-1,1-\frac{d}{n},s\right) \leq I_{d/n}(d+1,n-d)
\end{equation}
is implied by Eq.~\eqref{eq:maxscase3} with the choice of $\bar q=\frac{d}{n}$, proving $(i)$ and $(ii)$.\\

To show Eq.~\eqref{eq:Qupper} it thus remains to prove $(iii)$ for $a\geq 2$ (equivalently $s<c$). Since $I'_{x(p)}(a,b)$ is maximized at $p_*$, by Eq.~\eqref{eq:global I'}, and $\ell'(p)$ is constant it is sufficient to verify $(iii)$ at a single point
\begin{equation}\label{eq:toShow}
     I'_{x(p_*)}(a,b) \leq \ell'(p),
\end{equation}
that we need to prove for $s< c < n-1$, i.e.~$a\geq 2$ and $b\geq 2$. First, for the right hand side with the help of Eq.~\eqref{eq: beta to binomial}  we obtain
\begin{align}
\ell'(p) =  n(1-\alpha^\dag)&=n(1- I_{1-\frac{c+1}{n}}(n-c,c+1))\\
&= n\left(1 -\sum_{k=0}^c\binom{n}{k}\left(\frac{c+1}{n}\right)^k
\left(1-\frac{c+1}{n}\right)^{n-k} \right) \\
&= n\left(\sum_{k=c+1}^n\binom{n}{k}\left(\frac{c+1}{n}\right)^k
\left(1-\frac{c+1}{n}\right)^{n-k} \right) \\
& \geq n \frac{1}{2}
\end{align}
for the last line we used the fact that for an integer mean $n p =n\frac{c+1}{n}$ the median of the binomial distribution is given by $\lfloor np \rfloor= \lceil np \rceil = c+1$, see Corollary 1 in \cite{kaas}. Next, with the help of Eq.~\eqref{eq: I'} the left hand side of Eq.~\eqref{eq:toShow} can be written as
\begin{eqnarray}
I'_{x(p_*)}(a,b)\ &&= n \binom{n-s-1}{c-s} \left(\frac{c-s}{n-s-1}\right)^{c-s} \left(1-\frac{c-s}{n-s-1}\right)^{(n-s-1)-(c-s)} \\
&&= n \binom{N}{y} \left(\frac{y}{N} \right)^{y} \left(\frac{N-y}{N} \right)^{N-y}
\end{eqnarray}
with integers $N= n-s-1 > y = c-s \geq 1$. The result of the following Lemma~\ref{maxbinomial} directly implies 
\begin{equation}
    n \binom{N}{y} \left(\frac{y}{N} \right)^{y} \left(\frac{N-y}{N} \right)^{N-y}\leq n \frac{1}{2}.
\end{equation} 
Combining with the bound on the right hand side gives
\begin{equation}
    I'_{x(p_*)}(a,b) \leq \frac{n}{2}\leq \ell'(p)
\end{equation}
and completes the demonstration of Eq.\eqref{eq:Qupper}. \\

We now prove inequality~\eqref{eq:Tupper}. Since $\Qm_n(c,p)$ and $R_n(c,p)$ have identical domains over $p\in[\frac{c}{n},\frac{c+1}{n}]$, for every $\alpha\in[\alpha^\dag,1]$, there exist $p',p''\in[\frac{c}{n},\frac{c+1}{n}]$, such that
\begin{equation}
\alpha = \Qm_n(c,p'') = R_n(c,p').
\end{equation}
Moreover, Eq.~\eqref{eq:Qupper} implies
\begin{eqnarray}
&&\Qm_n(c,p')\leq R_n(c,p') = \Qm_n(c,p'')\\
&&\Longleftrightarrow p' \geq p'',
\end{eqnarray}
because $\Qm_n(c,p)$ is a decreasing function of $p$, c.f.~Lemma~\ref{thm:fcont}.
Since $\Tm$ is the inverse function of $\Qm$, and $U$ of $R$, we obtain as desired
\begin{eqnarray}
\Tm_n(c,\alpha)\ &&= \Tm_n(c,\Qm_n(c,p''))\\
&&= p''\\
&& \leq p'\\
&&=U_n(c,R_n(c,p'))\\
&&=U_n(c,\alpha).
\end{eqnarray}
\end{proof}

\begin{lemma}\label{maxbinomial}
For two integers $N > y > 0$, the following bound holds
\begin{equation}
    \binom{N}{y} \left(\frac{y}{N} \right)^{y} \left(\frac{N-y}{N}\right)^{N-y}  \leq \frac{1}{2}
\end{equation}
\end{lemma}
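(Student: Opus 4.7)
The plan is to recognize the left-hand side as a binomial point probability and then use a three-term tail estimate. Specifically, let $X \sim \mathrm{Bin}(N, p)$ with $p = y/N$, so that
\begin{equation}
P(X=k) = \binom{N}{k} \left(\frac{y}{N}\right)^k \left(1-\frac{y}{N}\right)^{N-k},
\end{equation}
and the quantity we want to bound is exactly $P(X=y)$. Denote $p_k = P(X=k)$.

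I would first compute the ratios to the mode $y$:
\begin{equation}
\frac{p_{y-1}}{p_y} = \frac{y}{N-y+1}\cdot\frac{1-p}{p} = \frac{N-y}{N-y+1}, \qquad \frac{p_{y+1}}{p_y} = \frac{N-y}{y+1}\cdot\frac{p}{1-p} = \frac{y}{y+1},
\end{equation}
using $p/(1-p) = y/(N-y)$. Since $p_{y-1}+p_y+p_{y+1}\leq 1$, this yields
\begin{equation}
p_y\left(1 + \frac{N-y}{N-y+1} + \frac{y}{y+1}\right) \leq 1.
\end{equation}

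The proof is then finished by showing the parenthesis is at least $2$, equivalently
\begin{equation}
\frac{N-y}{N-y+1} + \frac{y}{y+1} \geq 1 \quad\Longleftrightarrow\quad \frac{1}{N-y+1} + \frac{1}{y+1} \leq 1.
\end{equation}
Under the hypothesis $N > y > 0$ with $N, y$ integers, we have $N-y \geq 1$ and $y\geq 1$, so each of the two reciprocals is at most $1/2$ and the last inequality holds.

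There is no real obstacle here; the only thing to watch is that the indices $y-1$ and $y+1$ are valid (both lie in $\{0,\ldots,N\}$ since $1\leq y \leq N-1$), and that equality in $p_y \leq 1/2$ is achieved exactly at $(N,y)=(2,1)$, where $\frac{1}{N-y+1}+\frac{1}{y+1}=1$ and $p_{y-1}+p_y+p_{y+1}=1$, as a consistency check that the bound is tight.
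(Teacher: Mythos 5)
Your proof is correct, and it takes a genuinely different route from the paper. You identify the left-hand side as the mode probability $p_y$ of a $\mathrm{Bin}(N,y/N)$ variable and bound it by comparing with its two neighbours: the ratios $p_{y-1}/p_y=\frac{N-y}{N-y+1}$ and $p_{y+1}/p_y=\frac{y}{y+1}$ are computed correctly, the three indices $y-1,y,y+1$ are indeed distinct and lie in $\{0,\dots,N\}$ under the hypothesis $N>y>0$, and the reduction of "parenthesis $\geq 2$" to $\frac{1}{N-y+1}+\frac{1}{y+1}\leq 1$ is immediate from $y\geq 1$ and $N-y\geq 1$. The paper instead invokes a Stirling-type upper bound on the binomial coefficient (Corollary 1 of Sasv\'ari) to get $p_y\leq\sqrt{N/(2\pi y(N-y))}$, shows this is below $1/2$ for $N\geq 3$ by convexity in $y$ and monotonicity in $N$, and must then treat $N=2$ as a separate case because the Stirling bound gives only $\sqrt{1/\pi}>1/2$ there. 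Your argument is entirely self-contained, avoids the external reference and the case split, and captures the equality case $(N,y)=(2,1)$ within the same computation; what the paper's approach buys in exchange is an explicit quantitative decay rate $O(1/\sqrt{N})$ of the mode probability, which is stronger information than the lemma requires but is not used elsewhere.
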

\begin{proof}
Using the bound on the binomial coefficient
\begin{equation}
\binom{N}{y}\leq\sqrt{\frac{N}{2\pi y(N-y)}}\frac{N^N}{y^y(N-y)^{N-y}}
\end{equation}
from Corollary 1 of~\cite{Sasvari99}, valid when $0<y<N$, we obtain
\begin{equation}
\binom{N}{y} \left(\frac{y}{N} \right)^{y} \left(\frac{N-y}{N} \right)^{N-y}  \leq \sqrt{\frac{N}{2\pi y(N-y)}}.
\end{equation}
Now for $1\leq y\leq N-1$ and $N\geq 3$, this quantity is strictly smaller than $1/2$. Indeed, this function is convex in $y$ and symmetric under exchange $y\to N-y$, so it is upper bounded by its value at $y=1$, which is
\begin{equation}
\sqrt{\frac{N}{2\pi (N-1)}}.
\end{equation}
The derivative of this function in N is negative, so its largest value $\frac{1}{2}\sqrt{\frac{3}{\pi}}<\frac{1}{2}$ is obtained for $N=3$.

It remains to examine the special cases $N<3$. It leaves us with the unique possibility $y=1$ and $N=2$ satisfying $1\leq y=1 \leq N-1 =1$, and
\begin{equation}
\binom{N}{y} \left(\frac{y}{N} \right)^{y} \left(\frac{N-y}{N} \right)^{N-y} = \frac{1}{2}. 
\end{equation}
\end{proof}

We now define the function $g_\qb(d,n)$ as
\begin{equation}
\begin{split}
g_\qb(d,n)=
\begin{cases}
1                 & d \leq n\qb-1\\
1+(1-\alpha^\dag)(n\qb-d-1) & n\qb-1 < d < n\qb\\
I_\qb(d+1, n-d)   & n\qb \leq d
\end{cases}
\end{split}
\end{equation}
and its inverse $g^{-1}_\alpha(d,n)$ as
\begin{equation}\label{eq:defgg}
\begin{split}
g^{-1}_\alpha(d,n)=
\begin{cases}
\frac{1}{n}\left(d+1-\frac{1-\alpha}{1-\alpha^\dag}\right) & \alpha^\dag < \alpha \leq 1\\
I_{\alpha}^{-1}(d+1, n-d)  & 0 \leq \alpha \leq \alpha^\dag,
\end{cases}
\end{split}
\end{equation}
where we recall that $\alpha^\dag = I_{d/n}(d+1,n-d)$. We further extend this function to the value $d=-1$ by defining
\begin{equation}\label{eq:defgtilde}
\tilde g^{-1}_\alpha(d,n)=
\begin{cases}
0 & d = -1\\
\frac{1}{n}\left(d+1-\frac{1-\alpha}{1-\alpha^\dag}\right) & d\geq 0, \alpha^\dag < \alpha \leq 1\\
I_{\alpha}^{-1}(d+1, n-d)  & d \geq 0, 0 \leq \alpha \leq \alpha^\dag,
\end{cases}
\end{equation}

We now show that this allows us to write the simple confidence interval $[\hat q^g, 1]$ on $\qb$ with
\begin{equation}
\hat q^g_\alpha(n\Tb,n) = \tilde g_\alpha^{-1}(n\Tb-1,n).
\end{equation}
For clarity, we refer to $\alpha^\dag$ when $d=n\Tb-1$ as the function $\alpha^*(n\Tb,n)$, as defined in Eq.~\eqref{def:alphaStar}. This allows expressing the statistic $\hat q^g$ directly in terms of $\alpha$, $n\Tb$ and $n$ as given in Eq.~\eqref{eq:qhatg}.

\begin{corollary}\label{simpleBound}
The two following confidence intervals on the average winning probability $\qb$ hold:
\begin{eqnarray}
P(\qb &\geq& \hat q^{g}_\alpha(n\Tb,n)) \geq 1-\alpha \label{eq:confIntSimple1}\\
P(\qb &\leq& 1-\hat q^{g}_\alpha(n(1-\Tb),n)) \geq 1-\alpha \label{eq:confIntSimple2}
\end{eqnarray}
Moreover, these confidence intervals are identical to those given in Prop.~\ref{mainResult} when either $n\Tb\leq 1$ or $\alpha\leq\alpha^*(n\Tb,n)$, and in particular when $\alpha\leq 1/4$.
\end{corollary}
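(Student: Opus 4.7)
The strategy is to compare $\hat q^g$ pointwise to $\hat q^f$ using Lemma~\ref{linearBounds}, and then deduce validity of the confidence interval from Theorem~\ref{mainResult}. Writing $d = n\Tb - 1$ and $c = n - n\Tb$, the two statistics $\tilde f_\alpha(d,n)$ and $\tilde g^{-1}_\alpha(d,n)$ agree trivially at $d=-1$ (both vanish) and on the branch where they are defined by $I^{-1}_\alpha(d+1,n-d)$, i.e.~when $\alpha \leq \alpha^\dag = \alpha^*(n\Tb, n)$. They differ only when $n\Tb \geq 1$ and $\alpha^\dag < \alpha \leq 1$, where Lemma~\ref{linearBounds} gives $\Tm_n(c,\alpha) \leq U_n(c,\alpha)$. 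Using $\hat q^f_\alpha(n\Tb,n) = 1 - \Tm_n(c,\alpha)$ and $\hat q^g_\alpha(n\Tb,n) = 1 - U_n(c,\alpha)$ in this regime, this inequality translates immediately into $\hat q^g_\alpha(n\Tb,n) \leq \hat q^f_\alpha(n\Tb,n)$. Combining with Theorem~\ref{mainResult} yields $P(\qb \geq \hat q^g) \geq P(\qb \geq \hat q^f) \geq 1-\alpha$, establishing Eq.~\eqref{eq:confIntSimple1}. Equation~\eqref{eq:confIntSimple2} then follows from the change of variables $q_i \to 1 - q_i$, exactly as in Theorem~\ref{mainResult}.

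For the equality claim, the branch $\alpha \leq \alpha^*(n\Tb, n)$ is immediate from the definitions, since both $\tilde f_\alpha$ and $\tilde g^{-1}_\alpha$ reduce to $I^{-1}_\alpha(n\Tb, n - n\Tb + 1)$ there. The case $n\Tb \leq 1$ exploits the tightness statement in Lemma~\ref{linearBounds} at $c = n - 1$: for $n\Tb = 0$ both statistics vanish by construction, and for $n\Tb = 1$ one has $c = n - 1$, so $\Tm_n(n-1,\alpha) = U_n(n-1,\alpha)$ gives equality $\hat q^g = \hat q^f$.

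The ``in particular'' claim then reduces to showing that $\alpha^*(n\Tb, n) \geq 1/4$ whenever $n\Tb \geq 2$, which is the main technical obstacle. Using the standard binomial--beta identity $I_x(a,b) = \sum_{j=a}^{a+b-1}\binom{a+b-1}{j}x^j(1-x)^{a+b-1-j}$, one rewrites
\[
\alpha^*(n\Tb, n) \;=\; P\!\left(B\bigl(n,\tfrac{n\Tb-1}{n}\bigr) \geq n\Tb\right),
\]
i.e.~the probability that a binomial random variable strictly exceeds its \emph{integer} mean $n\Tb - 1$. I would approach this by combining the classical Kaas--Buhrman result that the median of an integer-mean binomial coincides with the mean (giving $P(X \geq n\Tb - 1) \geq \tfrac{1}{2}$) with a sharp upper bound on $P(X = n\Tb - 1)$. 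Lemma~\ref{maxbinomial} alone only delivers $P(X=n\Tb-1)\leq \tfrac{1}{2}$, which is insufficient, so I would refine the argument by an auxiliary monotonicity analysis of $\alpha^*(n\Tb,n)$ in $n$ for fixed $n\Tb$, verifying by direct computation the unique extremal configuration $n = n\Tb = 2$, where $\alpha^*(2,2) = I_{1/2}(2,1) = 1/4$ is attained with equality and the claim is tight.
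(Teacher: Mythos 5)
The first two-thirds of your argument are sound and coincide with the paper's: validity of Eq.~\eqref{eq:confIntSimple1} follows from $\hat q^g_\alpha(n\Tb,n)\leq \hat q^f_\alpha(n\Tb,n)$ (which you correctly derive from $\Tm_n(c,\alpha)\leq U_n(c,\alpha)$ of Lemma~\ref{linearBounds} with $c=n-n\Tb$) together with Theorem~\ref{mainResult}; Eq.~\eqref{eq:confIntSimple2} follows by the reflection $q_i\to 1-q_i$; and the equality claims for $\alpha\leq\alpha^*$ and for $n\Tb\leq 1$ (via tightness at $c=n-1$) are exactly the paper's observations. Your rewriting of $\alpha^*$ as $P\bigl(B(n,\tfrac{n\Tb-1}{n})\geq n\Tb\bigr)$ and the identification of $\alpha^*(2,2)=I_{1/2}(2,1)=1/4$ as the extremal case are also correct.

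The genuine gap is in the ``in particular'' claim: the inequality $\alpha^\dag=I_{d/n}(d+1,n-d)\geq 1/4$ for all $1\leq d\leq n-1$ is where essentially all of the work lies, and you do not prove it --- you only announce that you ``would'' establish monotonicity of $\alpha^*$ in $n$ at fixed $n\Tb$ and check the base case. That monotonicity (of $P(B(n,d/n)\geq d+1)$ in $n$ for fixed $d$) is a nontrivial statement in its own right; it is true (it is of Anderson--Samuels type), but asserting it without proof or citation leaves the corollary unestablished. You also correctly note that the median bound $P(X\geq d)\geq\tfrac12$ combined with Lemma~\ref{maxbinomial}'s $P(X=d)\leq\tfrac12$ is insufficient, but be aware that the sharper requirement $P(X=d)\leq\tfrac14$ is simply \emph{false} near the boundary: e.g.\ for $d=1$, $P(X=1)\to 2e^{-1}\approx 0.27$ and for $d=n-1$, $P(X=n-1)=(1-\tfrac1n)^{n-1}>e^{-1}$, so the median-minus-pmf decomposition can only ever cover an interior range of $d$. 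This is precisely why the paper's proof splits into six regions: dedicated monotonicity-in-$n$ arguments (via explicit derivatives and logarithm bounds from~\cite{Flemming04}) for $d=1$, $d=2$, $d=n-1$, $d=n-2$; a finite table for $n\leq 16$; and the median decomposition with a proof that $\binom{n}{d}(d/n)^d(1-d/n)^{n-d}$ decreases in $n$ (via harmonic-number bounds) only for $3\leq d\leq n-2$, $n\geq 17$. To complete your proof you would need either to carry out your global monotonicity claim rigorously or to reproduce a case analysis of comparable length.
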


\begin{proof}
The validity of these confidence intervals is a direct consequence of Theorem~\ref{mainResult} and Lemma~\ref{linearBounds}.

It remains to show equality between $\tilde f^{-1}_\alpha(d,n)$ and $\tilde g^{-1}_\alpha(d,n)$ under the stated conditions. First, the case $n\Tb=0$, i.e.~$d=-1$, can be checked explicitly, so we only need to study the equality between $f^{-1}_\alpha(d,n)$ and $g^{-1}_\alpha(d,n)$ when $d \geq 0$. Second, we notice that $\Tm_n(c,\alpha)=U_n(c,\alpha)$ for $c=n-1$. This guarantees that $f^{-1}_\alpha(d,n)=g^{-1}_\alpha(d,n)$ for $d=0$, i.e.~$n\Tb=1$. Third, it is clear from the definition that $f^{-1}_\alpha(d,n)=g^{-1}_\alpha(d,n)$ when $d\geq 0$ and $\alpha\leq\alpha^\dag$. Finally, since equality in the cases $n\Tb\leq 1$ is valid for all $\alpha$, we prove that the condition $\alpha\leq1/4$ is sufficient by showing $\alpha^\dag\geq 1/4$ for $1\leq d\leq n-1$, $n\in\mathbb{N}^*$. With the help of Eq.~\eqref{eq: beta to binomial} we write
\bea
\alpha^\dag & =& I_\frac{d}{n}(d+1,n-d)\\
&=&\sum_{k=d+1}^n\binom{n}{k}\left(\frac{d}{n}\right)^k\left(\frac{n-d}{n}\right)^{n-k}
\eea
and verify $\alpha^\dag\geq 1/4$ in six different regions.

\begin{itemize}
\item In case $d=n-1$, $n\geq 2$, we write 
\begin{equation}
\alpha^\dag =\left(1-\frac{1}{n}\right)^{n}
\end{equation}
This quantity is 1/4 for $n=2$, and increases with $n$. Indeed, its derivative with respect to $n$ is positive:
\begin{equation}
\left(1-\frac{1}{n}\right)^n(n-1)^{-1}\left(1+(n-1)\log\left(1-\frac{1}{n}\right)\right)
\end{equation}
is the product of two positive terms with a third positive one:
\begin{align}
1+(n-1)\log\left(1-\frac{1}{n}\right)&\geq 1+(n-1)\left(1-\frac{n}{n-1}\right) \\
&\geq 0,
\end{align}
where we used the bound $\log(x)\geq 1-\frac{1}{x}$.

\item The case $d=n-2$, $n\geq 3$, can be treated similarly: we write 
\begin{equation}
\alpha^\dag =\left(n-2\right)^{n-1}n^{-n}(3n-2)
\end{equation}
This quantity is larger than 1/4 for $n=3$, and increases with $n$. Indeed, its derivative with respect to $n$ is positive:
\begin{equation}
\left(n-2\right)^{n-2}n^{-n}\left(6n-8 + (4-8n+3n^2)\log\left(1-\frac{2}{n}\right)\right)
\end{equation}
is the product of two positive terms with a third positive one:
\begin{equation}
6n-8 + (4-8n+3n^2)\log\left(1-\frac{2}{n}\right)\geq 2-\frac{4}{n},
\end{equation}
where we used this time the bound $\log(1+x)\geq \frac{x}{2}\frac{2+x}{1+x}$ for $-1<x\leq0$~\cite{Flemming04}.

\item In the case $d=1$, $n\geq 2$ we have
\begin{equation}
\begin{split}
\alpha^\dag&=(n-1)^{n-1}n^{-n}\left(1-2n+(n-1)\left(1-\frac{1}{n}\right)^{-n}\right)
\end{split}
\end{equation}
This quantity is 1/4 for $n=2$, and increases with $n$. Indeed, its derivative with respect to $n$ is positive:
\begin{equation}
\left(n-1\right)^{n-1}n^{-n}\left(-2-(2n-1)\log\left(1-\frac{1}{n}\right)\right)
\end{equation}
is the product of two positive terms with a third positive one:
\begin{equation}
\begin{split}
&-2-(2n-1)\log\left(1-\frac{1}{n}\right) \geq 0,
\end{split}
\end{equation}
where we used the bound $\log(1+x)\leq \frac{2x}{2+x}$ for $-1<x\leq0$~\cite{Flemming04}.

\item In the case $d=2$, $n\geq 3$ we have
\begin{equation}
\begin{split}
\alpha^\dag&=(n-2)^{n-2}n^{-n}\left(-4+10n-5n^2+(n-2)^2\left(1-\frac{2}{n}\right)^{-n}\right)
\end{split}
\end{equation}
This quantity is larger than 1/4 for $n=3$, and increases with $n$. Indeed, its derivative with respect to $n$ is positive:
\begin{equation}
\left(n-2\right)^{n-2}n^{-n}\left(10-10n + (-4+10n-5n^2)\log\left(1-\frac{2}{n}\right)\right)
\end{equation}
is the product of two positive terms with a third positive one:
\begin{equation}
\begin{split}
10-10n + (-4+10n-5n^2)\log\left(1-\frac{2}{n}\right)\geq\frac{4(n-1)}{2-6n+3n^2},
\end{split}
\end{equation}
where we used the bound $\log(1+x)\leq \frac{3x(2+x)}{6+6x+x^2}$ for $-1<x\leq0$~\cite{Flemming04}. This last term is positive when $n\geq1+\frac{1}{\sqrt{3}}\simeq1.58$.

\item All cases with $n\leq16$ and $1\leq d\leq n-1$ can be checked by direct computation, see Table~\ref{table:alphaStar}.

\renewcommand{\arraystretch}{1.3}
\begin{table}
\begin{tabular}{|*{17}{c|}}
\hline
 \diagbox{$n$}{$d$} & 1 & 2 & 3 & 4 & 5 & 6 & 7 & 8 & 9 & 10 & 11 & 12 & 13 & 14 & 15  \\ \hline
16 & 0.264 & 0.323 & 0.352 & 0.370 & 0.382 & 0.391 & 0.397 & 0.402 & 0.405 & 0.407 & 0.407 & 0.405 & 0.400 & 0.388 & 0.356\\ \hline
15 & 0.264 & 0.323 & 0.352 & 0.370 & 0.382 & 0.390 & 0.396 & 0.401 & 0.403 & 0.404 & 0.403 & 0.398 & 0.387 & 0.355\\  \cline{1-15}
14 & 0.264 & 0.323 & 0.352 & 0.369 & 0.381 & 0.389 & 0.395 & 0.399 & 0.401 & 0.400 & 0.396 & 0.385 & 0.354\\ \cline{1-14}
13 & 0.264 & 0.323 & 0.351 & 0.369 & 0.381 & 0.389 & 0.394 & 0.397 & 0.397 & 0.394 & 0.383 & 0.353\\ \cline{1-13}
12 & 0.264 & 0.323 & 0.351 & 0.368 & 0.380 & 0.387 & 0.392 & 0.393 & 0.391 & 0.381 & 0.352\\ \cline{1-12}
11 & 0.264 & 0.322 & 0.351 & 0.368 & 0.379 & 0.385 & 0.388 & 0.387 & 0.379 & 0.350\\ \cline{1-11}
10 & 0.264 & 0.322 & 0.350 & 0.367 & 0.377 & 0.382 & 0.383 & 0.376 & 0.349\\ \cline{1-10}
9 & 0.264 & 0.322 & 0.350 & 0.366 & 0.374 & 0.377 & 0.372 & 0.346\\ \cline{1-9}
8 & 0.264 & 0.321 & 0.349 & 0.363 & 0.370 & 0.367 & 0.344\\ \cline{1-8}
7 & 0.264 & 0.321 & 0.347 & 0.359 & 0.360 & 0.340\\ \cline{1-7}
6 & 0.263 & 0.320 & 0.344 & 0.351 & 0.335\\ \cline{1-6}
5 & 0.263 & 0.317 & 0.337 & 0.328\\ \cline{1-5}
4 & 0.262 & 0.313 & 0.316\\ \cline{1-4}
3 & 0.259 & 0.296\\ \cline{1-3}
2 & 1/4 \\ \cline{1-2}
\cline{1-1}
\end{tabular}
\caption{Numerical values of $\alpha^\dag=I_{d/n}(d+1,n-d)$ for $2\leq n\leq 16$ and $1\leq d\leq n-1$. All values in the table are larger or equal to $1/4$.}
\label{table:alphaStar}
\end{table}

\item To treat the remaing cases where $3\leq d\leq n-2$ and $n \geq 17$, we decompose $\alpha^\dag$ in two terms:
\begin{align}
\alpha^\dag&=\sum_{k=d+1}^n\binom{n}{k}\left(\frac{d}{n}\right)^k\left(\frac{n-d}{n}\right)^{n-k}\\
&=\sum_{k=d}^n\binom{n}{k}\left(\frac{d}{n}\right)^k\left(\frac{n-d}{n}\right)^{n-k}\\
&\ \ -\binom{n}{d}\left(\frac{d}{n}\right)^d\left(\frac{n-d}{n}\right)^{n-d}.\nonumber
\end{align}
Since the median of a binomial distribution with parameter $\frac{d}{n}$ is $d$, the first sum above is larger than $1/2$. We thus need to show that the second term is upper bounded by $1/4$ for the whole remaining parameter region. We first notice that this quantity is indeed smaller than $1/4$ for $n=16$, $3\leq d\leq n-3$, see Table~\ref{table:tab2}. We now show that it is also smaller than $1/4$ when $d=n-3$ and $n\geq 17$.

When $d=n-3$,
\begin{align}
x(n,d)&=\binom{n}{d}\left(\frac{d}{n}\right)^d\left(\frac{n-d}{n}\right)^{n-d}\\
&=\frac{9}{2}(n-3)^{n-3}(n-2)(n-1)n^{-(n-1)}
\end{align}
We show that this quantity decreases with $n$ by upper bounding its derivative:
\begin{equation}
\frac{9}{2}(n-3)^{n-3}n^{-n}\left(2+3n(n-2)+n(n-1)(n-2)\log\left(1-\frac{3}{n}\right)\right)
\end{equation}
is the product of terms which are all positive except the last one:
\begin{equation}
2+3n(n-2)+n(n-1)(n-2)\log\left(1-\frac{3}{n}\right)\leq\frac{6-10n+3n^2}{3-2n},
\end{equation}
where we used the bound $\log(1+x)\leq \frac{2x}{2+x}$ for $-1<x\leq0$~\cite{Flemming04}. The numerator is positive here and the denominator negative, therefore $x(n,n-3)$ decreases with $n$ and is smaller than $1/4$ for all $n\geq 17$.

Finally, we show now that $x(n,d)$ decreases with $n$. The derivative of $x(n,d)$ with respect to $n$ can be written
\begin{equation}
\frac{\partial x(n,d)}{\partial n} = \binom{n}{d}\left(\frac{d}{n}\right)^d\left(1-\frac{d}{n}\right)^{n-d}\left[H(n) - H(n-d) + \log\left(1-\frac{d}{n}\right)\right]
\end{equation}
where $H(n)=\sum_{k=1}^n\frac{1}{k}$ is the harmonic number. The three first terms here are positive. To expand the square bracket, we use the relation~\cite{robjohn17}
\begin{equation}
\log(n)+\gamma+\frac{1}{2n+1} \leq H(n) \leq \log(n)+\gamma+\frac{1}{2n-1},
\end{equation}
where $\gamma$ is the Euler-Mascheroni constant. This last term is then upper bounded by
\begin{equation}
\frac{2(1-d)}{(2n-1)(2(n-d)+1)},
\end{equation}
which is negative when $n\geq 1/2$ and $1\leq d \leq n+1/2$.

\begin{table}
\begin{tabular}{|*{7}{c|}}
\hline
 $d$ & 3, 13 & 4, 12 & 5, 11 & 6, 10 & 7, 9 & 8  \\ \hline
 & &  & & &  &   \vspace{-8pt} \\
 & \scalebox{1.3}{$\frac{286216975729679085}{1152921504606846976}$} & \scalebox{1.3}{$\frac{241805655}{1073741824}$} & \scalebox{1.3}{$\frac{243406518990009375}{1152921504606846976}$} & \scalebox{1.3}{$\frac{7126259765625}{35184372088832}$} & \scalebox{1.3}{$\frac{228126063717356805}{1152921504606846976}$} & \scalebox{1.3}{$\frac{6435}{32768}$} \\
 & $\simeq 0.248254$ & $\simeq 0.225199$ & $\simeq 0.211122$ & $\simeq 0.20254$ & $\simeq 0.197868$ & $\simeq 0.196381$ \\ \hline
\end{tabular}
\caption{Values of $\binom{n}{d}\left(\frac{d}{n}\right)^d\left(\frac{n-d}{n}\right)^{n-d}$ for $n=16$ and $3\leq d\leq n-3$. All values in the table are smaller than $1/4$.}
\label{table:tab2}
\end{table}

\end{itemize}

\end{proof}

\section{Illustrations}
Figure~\ref{fig1} presents a comparison between the bounds provided by Theorem~\ref{mainResult} (Eq.~\eqref{eq:confInt1}) and Corollary~\ref{simpleBound} (Eq.~\eqref{eq:qhatg}), together with Eq.~\eqref{eq:MunichBound} and the bound implied from Hoeffding's 1963 inequality (Eq.~\eqref{eq:Hoeffding}).

\begin{figure}[h]
    \includegraphics[width=\columnwidth]{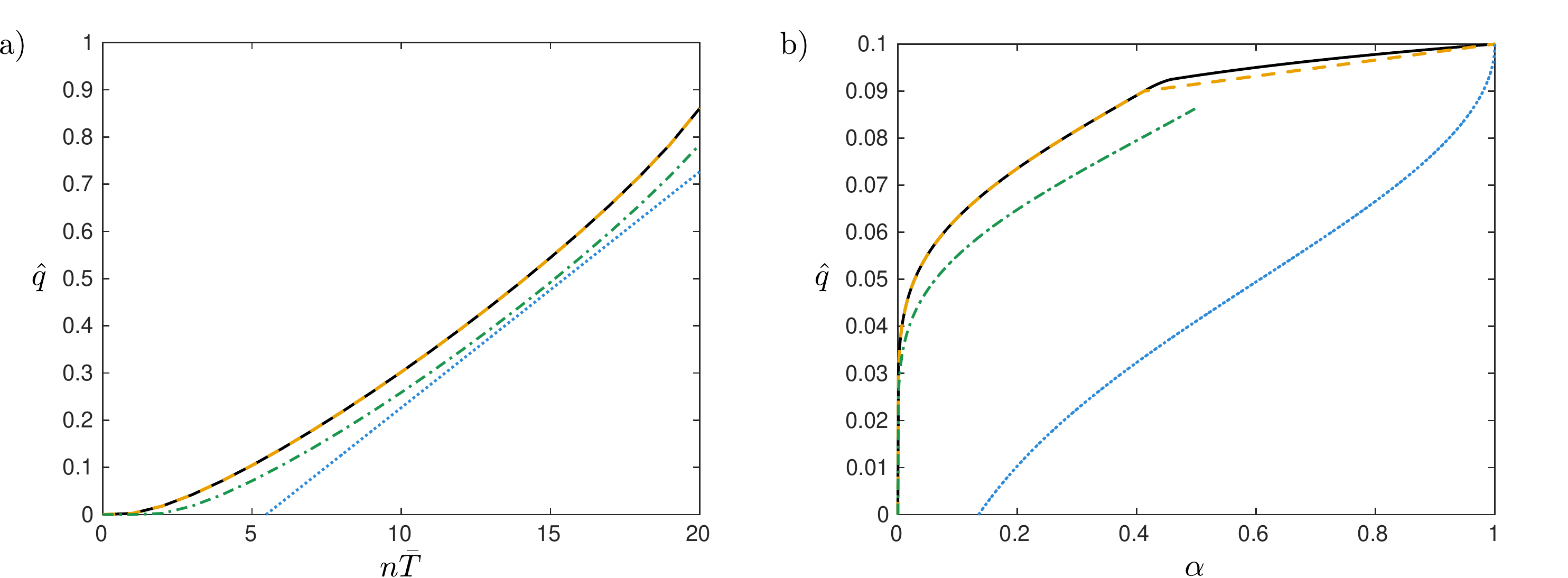}
    \caption{Lower bound $\hat q$ on the average success probability $\qb$ of non-identical Bernoulli trials. The bounds are functions of the total number of sample $n$, the number of successive samples $n\Tb$ and the confidence parameter $\alpha$. Four bounds are represented here as given by $\hat q^f$ (Buehler optimal), $\hat q^g$, $\hat q^1$ and Hoeffding's $\hat q^H$ (from top to bottom).
a) $n=20$ rounds and $\alpha=0.05$. Since $\alpha<1/4$, $\hat q^f=\hat q^g$ here. We note that the improvement of the first bounds is particularly appreciable for small and large number of successes $n\Tb$. In particular, a nontrivial bound on $\qb$ is provided even in presence of a single success: $\hat q^f_\alpha(1,20)=\hat q^g_\alpha(1,20)=\frac{1}{400}$.
b) $n=100$ rounds and $n\Tb=10$ successes. $\hat q^{1}$ only applies when $\alpha\leq 1/2$~\cite{Munich19}. Hoeffding's confidence interval yields no conclusion for high confidence levels ($\alpha\lesssim 0.14$), whereas the other bounds are nontrivial as soon as $\alpha>0$.}
    \label{fig1}
\end{figure}

\subsection{Application to sequential sampling with and without independence between the samples}
Consider the sampling of $n$ binary variables $T_i$, $i=1,\ldots,n$ in a sequential fashion, i.e.~where the variables are sampled one after the other one: first $T_1$, then $T_2$, followed by $T_3$, etc. If the samples are independent from each other, it is possible to associate a winning probability $q_i\in[0,1]$ to each $T_i$, even before the start of the experiment. The confidence intervals given by $\hat q^f$ and $\hat q^g$ can then be applied directly to bound the average winning probability $\qb=\frac1n\sum_{i=1}^n q_i$ as a function of $n$, $n\Tb$ and $\alpha$. These bounds apply even if the random variables are not identically distributed, i.e.~if $q_i$ depends on $i$.

In the case where the samples are not guarantees to be independent from each other, however, the winning probability of each variable $T_i$ cannot be described anymore by a single parameter $q_i$ set a priori, i.e.~independently of the other random variables. Since the $n$ random variables may be correlated with each other in this case, they must be described by a general joint probability distribution of the form
\begin{equation}
P(T_1,\ldots,T_n).
\end{equation}
In a single instance of a sequential scenario in which $T_{i}$ is only observed after $T_{i-1}$, this joint probability distribution cannot be fully explored. Rather, the sampling of $T_i$ in this case is described by the conditional probability distribution
\begin{equation}
P(T_i|T_1=t_1,\ldots,T_{i-1}=t_{i-1}),
\end{equation}
where $t_1,\ldots,t_{i-1}=0,1$ are the results observed in the first $i-1$ samples of the experiment. In this situation, a difference arises between the average winning probability over both all rounds $i$ and multiple realizations of the experiment, and the average winning probability over all rounds $i$ for the single realization occurring in a specific experiment (unlike in the previous case of independent samples where both averages are equal). Interestingly, the second quantity can be evaluated precisely in a single run of the experiment. Indeed, this average winning probability over all rounds can be obtained by identifying the parameter $q_i$ with the winning probability of the $i^\text{th}$ random variable $T_i$ conditioned on the results of the previous samples, i.e.~defining
\begin{equation}
q_i = P(T_i=1|T_1=t_1,\ldots,T_{i-1}=t_{i-1}).
\end{equation}
The confidence intervals $\hat q^f$ and $\hat q^g$ described in this work can then be employed to bound the average winning probability $\qb$ of the variables that were actually sampled during the course of a specific one-shot non-i.i.d.~experiment.


\section{Acknowledgements}
We are thankful to Marius Junge and Nicolas Sangouard for insightful comments and to Davide Rusca and Hugo Zbinden for discussions. We also thank the University of Basel for hosting during part of this project.

\appendix

\section{Regularized Incomplete Beta Function}\label{sec:AppendixBetaFunction}
We recall that the regularized incomplete Beta function $I_x(a,b)$ is defined as~\cite{Wilks43}
\begin{equation}
I_x(a,b) = \frac{\Gamma(a+b)}{\Gamma(a)\Gamma(b)} \int_0^x t^{a-1}(1-t)^{b-1} dt
\end{equation}
with $a,b>0$, where $\Gamma(x)$ is the Gamma function, and that it is related to the cumulative distribution function of the binomial distribution by 
\be\label{eq: beta to binomial}\begin{split}
    \sum_{k=d}^n \binom{n}{k} x^k (1-x)^{n-k}&= I_{x}(d, n-d+1) \\
\sum_{k=0}^{d}\binom{n}{k} x^k (1-x)^{n-k} &=I_{1-x}(n-d,d+1).
\end{split}
\ee
Moreover, the function satisfies $I_x(a,b)=1-I_{1-x}(b,a)$~\cite{nist}.

\end{document}